\newcommand{\R}{\ensuremath{\mathbb{R}}}
\newcommand{\F}{\ensuremath{\mathcal{F}}}
\def\e{\varepsilon}
\newtheorem {theorem} {Theorem}
\newtheorem {proposition} [theorem]{Proposition}
\newtheorem {lemma}  [theorem]{Lemma}
\begin{document}
\renewcommand{\arraystretch}{1.5}

\title[Perturbing periodic integral manifold of non-smooth differential systems]
{Limit cycles bifurcating from periodic integral manifold\\ in non-smooth differential systems}

\author[O.A.R. Cespedes]{Oscar A. R. Cespedes$^1$}
\address{$^1$ Departamento de Matem\'{a}tica, Facultad de Ciencias, Universidad Antonio Nariño, Bogotá, Colombia} 
\email{oramirez179@uan.edu.co} 

\author[D.D. Novaes]{Douglas D. Novaes$^2$}
\address{$^2$ Departamento de Matem\'{a}tica, Instituto de Matem\'{a}tica, Estat\'{i}stica e Computa\c{c}\~{a}o Cient\'{i}fica, Universidade
Estadual de Campinas, \ Rua S\'{e}rgio Buarque de Holanda, 651, Cidade Universit\'{a}ria Zeferino Vaz, 13083-859, Campinas, SP,
Brazil} 
\email{ddnovaes@unicamp.br}

\subjclass[2010]{34A36,37G15}

\keywords{periodic integral manifold, non-smooth differential systems, Limit cycles, Melnikov }

\maketitle

\begin{abstract}	
This paper addresses the perturbation of higher-dimensional non-smooth autonomous differential systems characterized by two zones separated by a codimension-one manifold, with an integral manifold foliated by crossing periodic solutions. Our primary focus is on developing the Melnikov method to analyze the emergence of limit cycles originating from the periodic integral manifold. While previous studies have explored the Melnikov method for autonomous perturbations of non-smooth differential systems with a linear switching manifold and with a periodic integral manifold, either open or of codimension 1, our work extends to non-smooth differential systems with a non-linear switching manifold and more general periodic integral manifolds, where the persistence of periodic orbits is of interest. We illustrate our findings through several examples, highlighting the applicability and significance of our main result.
\end{abstract}

\section{Introduction and  statements of the main results}
One of the principal challenges in the qualitative theory of differential systems in higher-dimensions is the investigation of the existence of invariant sets, particularly periodic orbits. This paper focuses on the study of periodic solutions of $(m+1)$-dimensional (with $m>1$) piecewise smooth autonomous differential systems of the form
  \begin{equation}\label{s0} 
        \left(\dot{x}, \dot{y}, \dot{z}\right)^T=X_0(x,y,z)+\varepsilon X_1(x,y,z)+\varepsilon^{2}R(x,y,z,\varepsilon), \quad (x,y,z)\in D,
    \end{equation}
where, for $m>n$, $D=D_1\times D_2 \times D_3\subset \R^{n}\times \R^{m-n}\times \R \cong \R^{m+1}$ is an open set,
$X_i(x,y,z)$ (for $i=0,1$) and $R(x,y,z,\varepsilon)$ are defined as the following piecewise functions
    \[
       X_i(x,y,z)=\left\{
            \begin{array}{cl}
               X_{i}^+(x,y,z), & z>g(x,y), \\
                X_{i}^-(x,y,z), & z<g(x,y), \\
            \end{array}
        \right.
    \]
   for $i=0,1$, and
    \[
        R(x,y,z,\varepsilon)=\left\{
            \begin{array}{cl}
                R^+(x,y,z,\varepsilon), & z>g(x,y), \\
                R^-(x,y,z,\varepsilon), & z<g(x,y),
            \end{array}
        \right.
    \]
with $X^{\pm}_{i}:D\rightarrow \R^n\times \R^{m-n}\times \R \cong \R^{m+1}$, $R^{\pm}:D\times (-\e_0,\e_0)\rightarrow\R^n\times \R^{m-n}\times \R \cong \R^{m+1}$, and $g:D_1\times D_2\rightarrow D_3$ being $\mathcal{C}^1$ functions. In this case, the switching manifold is given by $$\Sigma=\{(x,y,z)\in D:\, z=g(x,y)\}.$$ 

Our research is grounded in the fundamental hypothesis that the unperturbed vector field $X_0$ contains an $n$-dimensional manifold of initial conditions $\mathcal{Z} \subset \Sigma$, where the orbits are periodic and intersect $\Sigma$ transversely. The saturation of $\mathcal{Z}$ through the flow of $X_0$ forms a periodic integral manifold of $X_0$, which we will denote by $\widetilde{\mathcal{Z}}$. Our primary objective is to investigate the emergence of limit cycles in the perturbed system \eqref{s0}, originating from this periodic integral manifold $\widetilde{\mathcal{Z}}$. In the smooth context, this kind of problem has been addressed by Malkin \cite{Ma}, Rosseau \cite{Ro} and, more recently, in \cite{BFL,BGL,BGL2,CLN,JN3,RC}.

The Averaging Theory, a classical method for studying such problems, has been recently extended to piecewise smooth nonautonomous differential systems (see, for instance, \cite{BBLN19, LMN15, LliNovRod2017,LliNovRod2020,LNT2014}. However, applying the averaging method to autonomous differential systems like \eqref{s0} requires a suitable change of variables to transform the differential system into a non-autonomous periodic form. This transformation can be challenging, especially when the unperturbed differential system $X_0$ does not exhibit a linear center.

Another valuable tool for addressing this problem is the Melnikov method, which, under appropriate conditions, can be applied directly to the differential system \eqref{s0}. Previous studies have explored the Melnikov method for autonomous perturbations of non-smooth differential systems with a linear switching manifold and with a periodic integral manifold, either open or of codimension 1. For instance, in \cite{GLN}, the first-order Melnikov function was obtained for differential system \eqref{s0} by assuming that the switching manifold $\Sigma$ and $\widetilde{\mathcal{Z}}$ are hyperplanes. Assuming that $\Sigma$ is a hyperplane and $\widetilde{\mathcal{Z}}$ is an open submanifold of $D$, the first-order Melnikov method was developed in \cite{THM} for near-integrable differential systems, while in \cite{CLL}, the Melnikov method was developed to any order.

In this paper, we focus in extending the Melnikov method to non-smooth differential systems \eqref{s0} with non-linear switching manifolds and more general periodic integral manifolds, where the persistence of periodic orbits is of interest. Our main result is stated in Section \ref{sec:mf}. In Sections \ref{sec:ex1} and \ref{sec:ex2}, we shall illustrate our findings through several examples, highlighting the applicability and significance of our main result.

\subsection{The Melnikov function}\label{sec:mf}
 In order to state our main results, we need to introduce some preliminary concepts, assumptions, and notations.
Let $\mathcal{V}$ an open bounded subset of $\mathcal{\R}^n$ and $v:\overline{\mathcal{V}} \rightarrow \mathcal{\R}^{m-n}$ a $\mathcal{C}^{2}$ function such that
$$\mathcal{Z}=\text{Graph}(g|_{\mathcal{U}}),$$
where
 $$\mathcal{U}=\{(u, v(u)): u \in \overline{\mathcal{V}}\}\subset D_1\times D_2. $$
Denote by $\varphi_0(t, x,y,z)$ the solution of $(\dot{x}, \dot{y},\dot{z})^T=X_0(x,y,z)$, such that $\varphi_0(0, x,y,z)=(x,y,z)$. Additionally, let $\varphi_0^{\pm}(t, x,y,z)$ the solution of $(\dot{x}, \dot{y},\dot{z})^T=X^{\pm}_0(x,y,z)$, with initial condition $(x,y,z)$ at $t=0$. We assume the
following hypothesis:
 \begin{description}
    \item[(H1)]For each $u\in \mathcal{V}$, there exist $\tau^-(u)<0$ and $\tau^+(u)>0$ such that  
    \begin{itemize}
    \item[(a)] $\varphi_0^-(\tau^-(u),u, v(u), g(u,v(u)) )=\varphi_0^+(\tau^+(u),u, v(u), g(u,v(u)) ),$
\item[(b)] $h(\varphi_0^-(t,u, v(u), g(u,v(u)) ))<0,$ for $t \in( \tau^-(u),0)$,
\item[(c)] $h(\varphi_0^+(t,u, v(u), g(u,v(u)) ))>0,$  $t \in(0, \tau^+(u))$,
\item[(d)] $(X_0^{\pm} h)(\varphi^{+}_0(t, u, v(u), g(u, v(u))))>0$ for $t=0$,
\item[(e)] $(X_0^{\pm} h)(\varphi^{-}_0(t, u, v(u), g(u, v(u))))<0,$ for  $t=\tau^{-}(u)$,
\end{itemize}
where $h(x,y,z)=z-g(x,y)$ and $ (X_0^\pm h)(p)$
 denotes the Lie derivative of $h$ at $p$ in the direction of the vector field $X_0^{\pm}.$
\end{description}

Under hypothesis \textbf{(H1)}, it follows that for each $u\in \mathcal{V}$, the differential system \eqref{s0} for $\varepsilon=0$ admits a crossing periodic orbit $L_u = L_u^+ \cup L_u^-$, composed of an orbit segment of $X^+_0$, denoted as $L_u^+$, and an orbit segment of $X^-_0$, denoted as $L_u^-$, where,
\begin{equation*}
\begin{aligned}
 L_u^+ &= \{\varphi_0^-(t, u, v(u), g(u,v(u))): \tau^-(u) \leq t \leq 0\}, \\
 L_u^- &=  \{\varphi_0^+(t, u, v(u), g(u,v(u))): 0 \leq t \leq \tau^+(u)\}.
\end{aligned}
\end{equation*}
Moreover, $L_u^+$ and  $L_u^-$ intersect $\Sigma$ precisely at two points: $(u, v(u), g(u,v(u)))$ and $$\varphi_0^-(\tau^-(u), u, v(u), g(u,v(u)))=\varphi_0^+(\tau^+(u), u, v(u), g(u,v(u))),$$ and these intersections are transversal.

Now, consider the projection functions $\pi_1: \mathbb{R}^{n}\times \mathbb{R}^{m-n}\times \mathbb{R}\rightarrow \mathbb{R}^n$ and   $\pi_2: \mathbb{R}^{n}\times \mathbb{R}^{m-n}\times \mathbb{R}\rightarrow \mathbb{R}^{m-n}$  given by
$\pi_1(x,y,z)=x$ and $\pi_2(x,y,z)=y$. Given $\ell \in \mathbb{Z}^+$, we denote by $\text{Id}_{\ell}$ the $\ell\times\ell$ identity matrix and denote by $\Pi_{i}$ the matrix associated with $\pi_i$.

Finally, define the bifurcation function $\mathcal{M}: \mathcal{V}\rightarrow \mathbb{R}^n$ as follows:
\begin{equation}\label{Mel}
\begin{aligned}
\mathcal{M}(u) &=  \Pi_1 \cdot \left(\text{Id}_{m+1}-\boldsymbol{\beta}(u)  \cdot \left(\Pi_2 \cdot \boldsymbol{\beta}(u) \right)^{-1} \cdot \Pi_2\right)\cdot \boldsymbol{\alpha}(u),
\end{aligned}
\end{equation}
with 
$$\boldsymbol{\alpha}(u)= \boldsymbol{\alpha}^+(u) - \boldsymbol{\alpha}^-(u)\quad \text{and}\quad \boldsymbol{\beta}(u)= \boldsymbol{\beta}^+(u) - \boldsymbol{\beta}^-(u),$$
where
\begin{equation}\label{beta}
\begin{aligned}
\boldsymbol{\alpha}^\pm(u)&= \left( \text{Id}_{m+1} - \frac{X_0^\pm(\varphi_0^\pm \circ P^\pm(u)) \cdot \nabla h(\varphi_0^\pm \circ P^\pm(u))}{X_0^\pm h(\varphi_0^\pm \circ P^\pm(u))} \right) \cdot \omega^\pm \circ P^\pm(u),\\
\boldsymbol{\beta}^\pm(u) &= \left( \text{Id}_{m+1} - \frac{X_0^\pm(\varphi_0^\pm \circ P^\pm(u)) \cdot \nabla h(\varphi_0^\pm \circ P^\pm(u))}{X_0^\pm h(\varphi_0^\pm \circ P^\pm(u))} \right) \cdot Y^\pm \circ P^\pm(u),
\end{aligned}
\end{equation}
with $P^{\pm}(u) = (\tau^{\pm}(u),u,v(u),g(u,v(u)))$,
\[
Y^\pm(t,x,y,z) = \frac{\partial \varphi_0^\pm}{\partial y}(t,x,y,z) + \frac{\partial \varphi_0^\pm}{\partial z}(t,x,y,z) \cdot \frac{\partial g}{\partial y}(x,y),
\]
and
$$\omega^{\pm}(t,x,y,z)= D_{(x,y,z)} \varphi^{\pm}_0(t,x,y,z) \int_{0}^t (D_{(x,y,z)} \varphi^{\pm}_0(s,x,y,z))^{-1}\cdot X_1^{\pm}( \varphi^{\pm}_0(s,x,y,z))ds.$$

Now, we are ready to state our main result about the persistence of periodic orbits.
\begin{theorem}\label{thm}
In addition to hypothesis {\bf (H1)}, we assume that $\text{det}(\Pi_2 \cdot \boldsymbol{\beta}(u)) \neq 0$ for every $u\in \overline{\mathcal{V}}$. Then, for
each $u^*\in \mathcal{V}$, such that $\mathcal{M}(u^{*}) = 0$ and $\text{det}(D_u\mathcal{M})(u^{ *})\neq 0$, and $|\e|$ sufficiently small there is a unique crossing periodic solution $\varphi_{\varepsilon}(t)$ of \eqref{s0} satisfying $|\varphi_{\varepsilon}(0)- (u^*, v (u^*), g(u^*, v(u^*)))| \rightarrow 0$ when $\varepsilon \rightarrow 0$.
\end{theorem}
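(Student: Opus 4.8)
The plan is to build a displacement map on the switching manifold whose zeros detect the crossing periodic solutions of \eqref{s0}, and then to run a Lyapunov--Schmidt-type reduction in which the $(m-n)$ ``transverse'' equations are eliminated using the hypothesis $\det(\Pi_2\cdot\boldsymbol{\beta}(u))\neq 0$, so that the remaining $n$ equations become precisely $\mathcal{M}(u)=0$.

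First I would parametrize $\Sigma$ near $\mathcal{Z}$ by $\iota(u,y):=(u,y,g(u,y))$, with $u$ near $u^*$ and $y$ near $v(u)$. Applying the implicit function theorem to $h(\varphi^\pm_\e(t,\iota(u,y)))=0$ and using the transversality conditions \textbf{(H1)(d)--(e)}, I would show that for $|\e|$ small the perturbed flows still meet $\Sigma$ at times $\tau^+_\e(u,y)>0$ and $\tau^-_\e(u,y)<0$ depending $\mathcal{C}^1$ on $(u,y,\e)$, with $\tau^\pm_0(u,v(u))=\tau^\pm(u)$; since \textbf{(H1)(b)--(c)} are open conditions, the corresponding orbit arcs stay in the correct zones, so that the two half-return points assemble into a genuine crossing periodic orbit exactly when they coincide. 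I would then set
\[
\mathcal{D}(u,y,\e):=\varphi^+_\e\bigl(\tau^+_\e(u,y),\iota(u,y)\bigr)-\varphi^-_\e\bigl(\tau^-_\e(u,y),\iota(u,y)\bigr)\in\R^{m+1},
\]
and note that, both endpoints lying on $\Sigma$, one has $\mathcal{D}=0$ if and only if $\Pi_1\mathcal{D}=0$ and $\Pi_2\mathcal{D}=0$, while $\mathcal{D}(u,v(u),0)=0$ by \textbf{(H1)(a)}.

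Next I would Taylor expand in $\e$. Writing $\varphi^\pm_\e=\varphi^\pm_0+\e\,\omega^\pm+\mathcal{O}(\e^2)$, with $\omega^\pm$ the variation-of-parameters solution of the first variational equation (exactly the function defined before the statement), and differentiating $h(\varphi^\pm_\e(\tau^\pm_\e,\cdot))=0$ to get $\partial_\e\tau^\pm_\e=-(\nabla h\cdot\omega^\pm)/(X^\pm_0 h)$, the chain rule gives
\[
\partial_\e\bigl[\varphi^\pm_\e(\tau^\pm_\e(u,y),\iota(u,y))\bigr]_{\e=0}=\Bigl(\mathrm{Id}_{m+1}-\tfrac{X^\pm_0\,\nabla h}{X^\pm_0 h}\Bigr)\omega^\pm .
\]
Similarly, differentiating the unperturbed half-return map $\varphi^\pm_0(\tau^\pm_0(u,y),\iota(u,y))$ in $y$, using $\partial_y\iota(u,y)=(0,\mathrm{Id}_{m-n},\partial_y g)^{T}$ and $\partial_y\tau^\pm_0=-(\nabla h\cdot Y^\pm)/(X^\pm_0 h)$, yields $\partial_y[\varphi^\pm_0(\tau^\pm_0(u,y),\iota(u,y))]=(\mathrm{Id}_{m+1}-\tfrac{X^\pm_0\nabla h}{X^\pm_0 h})Y^\pm$. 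Evaluating at $y=v(u)$, so that the base point is $P^\pm(u)$, these identities are precisely $\partial_\e\mathcal{D}(u,v(u),0)=\boldsymbol{\alpha}(u)$ and $\partial_y\mathcal{D}(u,v(u),0)=\boldsymbol{\beta}(u)$.

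Finally I would do the reduction. Since $\Pi_2\mathcal{D}(u,v(u),0)=0$ and $\partial_y(\Pi_2\mathcal{D})(u,v(u),0)=\Pi_2\cdot\boldsymbol{\beta}(u)$ is invertible for every $u\in\overline{\mathcal{V}}$, a compactness argument plus the implicit function theorem gives a $\mathcal{C}^1$ map $\widehat{y}(u,\e)$ with $\widehat{y}(u,0)=v(u)$ and $\Pi_2\mathcal{D}(u,\widehat{y}(u,\e),\e)\equiv 0$, and differentiating in $\e$ gives $\partial_\e\widehat{y}(u,0)=-(\Pi_2\boldsymbol{\beta}(u))^{-1}\Pi_2\boldsymbol{\alpha}(u)$. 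Hence the reduced function $\overline{\mathcal{M}}(u,\e):=\Pi_1\mathcal{D}(u,\widehat{y}(u,\e),\e)$ satisfies $\overline{\mathcal{M}}(u,0)=0$ and
\[
\partial_\e\overline{\mathcal{M}}(u,0)=\Pi_1\bigl(\boldsymbol{\beta}(u)\,\partial_\e\widehat{y}(u,0)+\boldsymbol{\alpha}(u)\bigr)=\Pi_1\bigl(\mathrm{Id}_{m+1}-\boldsymbol{\beta}(u)(\Pi_2\boldsymbol{\beta}(u))^{-1}\Pi_2\bigr)\boldsymbol{\alpha}(u)=\mathcal{M}(u),
\]
so $\overline{\mathcal{M}}(u,\e)=\e\,\mathcal{M}(u)+\mathcal{O}(\e^2)$. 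Setting $\mathcal{G}(u,\e):=\overline{\mathcal{M}}(u,\e)/\e$ for $\e\neq 0$ and $\mathcal{G}(u,0):=\mathcal{M}(u)$, this is $\mathcal{C}^1$ with $\mathcal{G}(u^*,0)=0$ and $D_u\mathcal{G}(u^*,0)=D_u\mathcal{M}(u^*)$ invertible, so the implicit function theorem produces a unique branch $u(\e)$ with $u(0)=u^*$ and $\overline{\mathcal{M}}(u(\e),\e)=0$. Then $q_\e:=\iota(u(\e),\widehat{y}(u(\e),\e))$ makes both $\Pi_1\mathcal{D}$ and $\Pi_2\mathcal{D}$ vanish, hence $\mathcal{D}=0$, so the orbit of \eqref{s0} through $q_\e$ is a crossing periodic solution; since $q_\e\to(u^*,v(u^*),g(u^*,v(u^*)))$ and both applications of the implicit function theorem are local, it is the unique such solution. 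I expect the main obstacle to be the first step: rigorously establishing well-definedness and $\mathcal{C}^1$ dependence of the perturbed half-return maps on $(u,y,\e)$ over a uniform neighborhood — i.e.\ controlling the crossings of the perturbed flow with the \emph{nonlinear} manifold $\Sigma$ together with the preservation of the zone structure — and keeping the (only $\mathcal{C}^1$) differentiability of the data compatible with the first-order expansions above.
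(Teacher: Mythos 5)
Your proposal is correct and follows essentially the same route as the paper: the half-return displacement map on $\Sigma$, the first-order expansions $\partial_\e\mathcal{D}=\boldsymbol{\alpha}$ and $\partial_y\mathcal{D}=\boldsymbol{\beta}$ obtained from the variational equation and implicit differentiation of the crossing times (the paper's Propositions \ref{derflux} and \ref{dertem}), and a Lyapunov--Schmidt reduction eliminating the $y$-directions via the invertibility of $\Pi_2\cdot\boldsymbol{\beta}(u)$. The only cosmetic difference is that you unroll the reduction explicitly with two applications of the implicit function theorem, whereas the paper invokes a prepackaged reduction lemma (Lemma \ref{LS}); your identity $\partial_\e\overline{\mathcal{M}}(u,0)=\Pi_1\bigl(\mathrm{Id}_{m+1}-\boldsymbol{\beta}(u)(\Pi_2\boldsymbol{\beta}(u))^{-1}\Pi_2\bigr)\boldsymbol{\alpha}(u)$ reproduces exactly the bifurcation function \eqref{fLS} used there.
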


It is important to mention that Theorem \eqref{thm} generalizes the main result of \cite{GLN}.

\subsection{ Perturbations of 3D piecewise linear systems with a isochronous plane}\label{sec:ex1}
Consider the following 3D discontinuous piecewise linear differential system
\begin{equation}\label{example1}
\left(\dot{x}, \dot{y}, \dot{z}\right)^T=\left\{
\begin{array}{ll}
X_{i,0}^+(x,y,z)+\varepsilon X_1^+(x,y,z),& f(x,y,z)>0,\\
X_{i,0}^-(x,y,z)+\varepsilon X_1^-(x,y,z),& f(x,y,z)<0,
\end{array}\right.
\end{equation}
where  $i\in\{a,b\}$, $f: \R^3\rightarrow \R$ such that $\Sigma=f^{-1}(0)$ is a submanifold of codimension 1 of $\R^3$,
\begin{equation*}
X_{a,0}^{\pm}(x,y,z)=
\left(
\begin{array}{c}
x\\
-z\mp 1\\
x+y
\end{array}
\right), \quad
X_{b,0}^{\pm}(x,y,z)= \left(
\begin{array}{c}
x\\
z\mp 1\\
x+y
\end{array}\right), 
\end{equation*}
and
\begin{equation*}
X_1^{\pm}(x,y,z)=
\left(
\begin{array}{l}
\alpha_0^\pm + \alpha_1^\pm x + \alpha_2^\pm y + \alpha_3^\pm z \\
\beta_0^\pm + \beta_1^\pm x + \beta_2^\pm y + \beta_3^\pm z \\
\kappa_0^\pm + \kappa_1^\pm x + \kappa_2^\pm y + \kappa_3^\pm z
\end{array}\right)
\end{equation*}

In \cite{GLN} Llibre, Novaes and Gouveia considered that $f(x,y,z)=z$ and they proved that unperturbed differential system \eqref{example1}, when $\varepsilon=0$, has an invariant plane at $\mathcal{S}=\{(x,y,z)\in \R^3: x=0\}$ containing a period annulus $\mathcal{A}$ foliated by crossing periodic orbits, see Figure \ref{Fig1}. Additionally, they studied the number of the limit cycle that bifurcates from the periodic orbits of the invariant plane $\mathcal{S}$.  However, Propositions 7 and 8 in \cite{GLN}  present  some inconsistencies concerning the number of bifurcating limit cycles.  In the following result, as an application of Theorem \ref{thm}, we rectify these inconsistencies.

\begin{figure}[H]
	\begin{center}
\subfigure[$i=a$]{
\begin{overpic}[width=5cm]{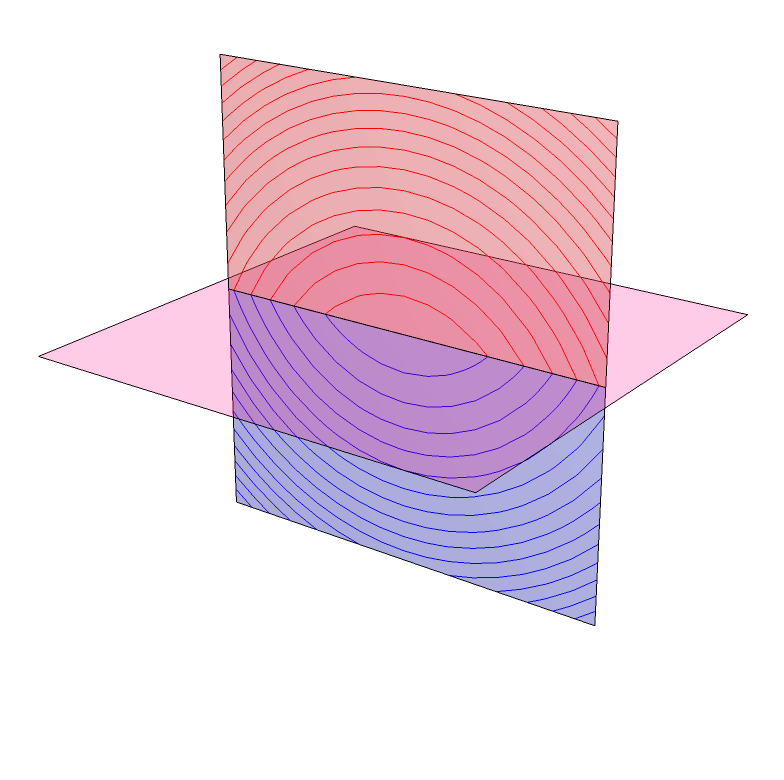}
 \put(85,45){$\Sigma$}
  \put(80,20){$\mathcal{S}$}
\end{overpic}}\qquad
\subfigure[$i=b$]{
\begin{overpic}[width=5cm]{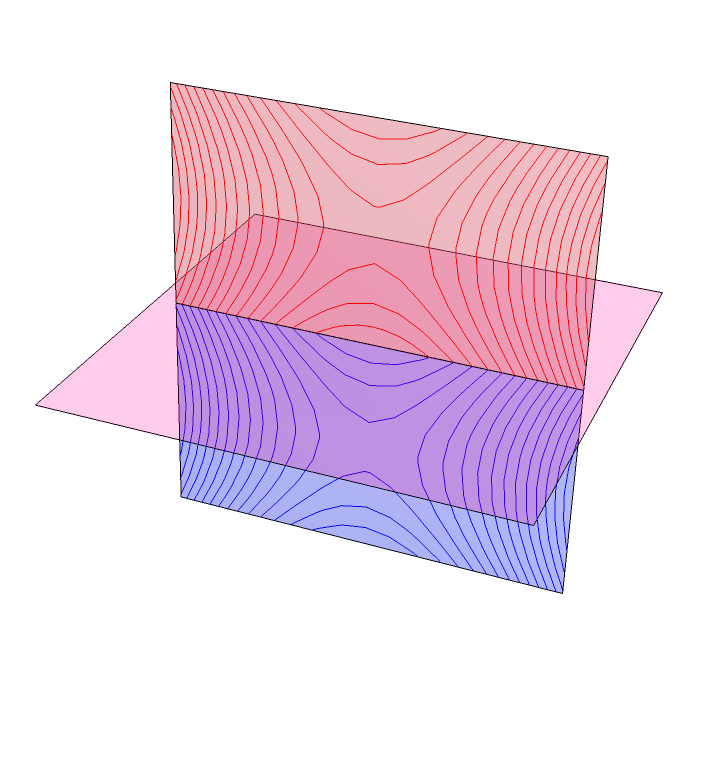}
 \put(85,45){$\Sigma$}
  \put(80,20){$\mathcal{S}$}
\end{overpic}}
\caption{Phase Portrait of unperturbed differential system \eqref{example1} on the invariant plane $\mathcal{S}$, for $i\in\{a,b\}$ and $f(x,y,z)= z$.}\label{Fig1}
\end{center}
\end{figure}

\begin{theorem}\label{p1}
For $f(x,y,z)=z$,  $i \in\{a,b\}$ and  $\varepsilon\neq 0$ sufficiently small,  differential system \eqref{example1} admits at least 2  limit cycle converging, when $\varepsilon\rightarrow0,$ to some of the periodic orbits contained in the plane $\mathcal{S}=\{(x,y,z)\in \R^3: x=0\}$.
\end{theorem}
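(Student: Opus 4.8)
The plan is to obtain Theorem \ref{p1} as a direct application of Theorem \ref{thm}, so the first task is to fit \eqref{example1} (with $f(x,y,z)=z$ and $R\equiv 0$) into the framework of \eqref{s0}. Since $m+1=3$, I would take $n=1$, $m-n=1$, switching function $h(x,y,z)=z$ (hence $g\equiv 0$ and $\Sigma=\{z=0\}$), and --- after interchanging the roles of the first two coordinates, since the period annulus lies in $\mathcal S=\{x=0\}$ --- parametrize the manifold $\mathcal Z$ of crossing initial conditions by $u\in\overline{\mathcal V}$ via $(x,y,z)=(0,u,0)$, i.e. $v\equiv 0$ in the notation of Section \ref{sec:mf}. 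To legitimize this I must first describe the unperturbed dynamics on $\mathcal S$: restricting $X_{i,0}^{\pm}$ to $\{x=0\}$ gives a planar linear system in $(y,z)$, namely a harmonic oscillator about $(0,\mp 1)$ when $i=a$ and a linear saddle about $(0,\mp 1)$ when $i=b$; in either case, concatenating the arc governed by $X_{i,0}^{+}$ in $\{z>0\}$ with the arc governed by $X_{i,0}^{-}$ in $\{z<0\}$ produces, for $u$ in a suitable bounded interval $\overline{\mathcal V}$, a crossing periodic orbit $L_u\subset\mathcal S$. Computing the crossing times $\tau^{\pm}(u)$ from the explicit planar flows and the Lie derivatives $(X_{i,0}^{\pm}h)=\dot z$ along the arcs, I would then verify hypotheses \textbf{(H1)}(a)--(e) together with the nondegeneracy condition $\det(\Pi_2\cdot\boldsymbol\beta(u))\neq 0$ on $\overline{\mathcal V}$.

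The second, and most laborious, step is the explicit evaluation of the scalar bifurcation function $\mathcal M(u)$ in \eqref{Mel}. Because the full vector fields $X_{i,0}^{\pm}$ are linear (with $\dot x=x$ and $y,z$ driven linearly by $x$), the flows $\varphi_0^{\pm}(t,x,y,z)$ and their variational matrices $D_{(x,y,z)}\varphi_0^{\pm}$ can be written in closed form --- built from $e^{t},\cos t,\sin t$ when $i=a$ (the matrix has eigenvalues $1,\pm i$) and from $e^{t},te^{t},e^{-t}$ when $i=b$ (eigenvalues $1,1,-1$ with a Jordan block). Since $X_1^{\pm}$ is affine, the integrand $(D_{(x,y,z)}\varphi_0^{\pm})^{-1}\cdot X_1^{\pm}(\varphi_0^{\pm})$ defining $\omega^{\pm}$ is elementary and the integral is carried out directly; and because $g\equiv 0$, $Y^{\pm}$ reduces to $\partial\varphi_0^{\pm}/\partial y$. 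Substituting $\boldsymbol\alpha^{\pm},\boldsymbol\beta^{\pm}$ from \eqref{beta} into \eqref{Mel}, with $\Pi_1,\Pi_2$ now the projections onto the relabeled $x$- and $y$-coordinates, and simplifying the $3\times3$ matrix algebra, I expect to arrive at an expression $\mathcal M(u)$ whose coefficients depend linearly on the perturbation parameters $\alpha_j^{\pm},\beta_j^{\pm},\kappa_j^{\pm}$.

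Finally, I would analyze the zero set of $\mathcal M$. The key point is to identify correctly the linear span of the functions of $u$ that actually occur in $\mathcal M(u)$ --- this is precisely where Propositions 7 and 8 of \cite{GLN} appear to have slipped --- and to check that this span is rich enough that the free parameters can be chosen so that $\mathcal M$ has at least two simple zeros $u_1^{*},u_2^{*}\in\mathcal V$, that is $\mathcal M(u_k^{*})=0$ and $\mathcal M'(u_k^{*})\neq 0$. Theorem \ref{thm} then provides, for each $k$ and for $|\varepsilon|$ small, a crossing periodic solution of \eqref{example1} whose initial condition tends to $(0,u_k^{*},0)\in\mathcal S$ as $\varepsilon\to0$; since $u_1^{*}\neq u_2^{*}$ these two limit cycles are distinct for $|\varepsilon|$ small, giving the claimed lower bound of two. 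The argument is run separately for $i=a$ and $i=b$, the only difference being the elementary functions entering $\varphi_0^{\pm}$. I expect the main obstacle to be purely computational: tracking the $3\times3$ variational matrices off $\mathcal S$ and the matrix manipulations in \eqref{Mel} accurately enough to pin down the exact function space realized by $\mathcal M$, after which the existence of two simple zeros is immediate.
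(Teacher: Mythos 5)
Your proposal follows essentially the same route as the paper: swap the first two coordinates so that \eqref{example1} fits the framework of \eqref{s0} with $n=1$ and $v\equiv 0$, compute the explicit linear flows and crossing times ($\tau^{\pm}(u)=\pm2\arctan u$ for $i=a$, logarithmic for $i=b$), evaluate $\boldsymbol\beta(u)$ and $\mathcal M(u)$ from \eqref{Mel}--\eqref{beta}, and produce two simple zeros by using the surjectivity of the parameter map onto the coefficients $(L_0,L_1,L_2)$. The paper carries out your final step concretely by checking, via Taylor expansions of the Wronskians near $u=0$, that the three functions spanning $\mathcal M$ form an ECT-system on a small interval $(0,r_0)$ and then invoking Theorem \ref{t1} together with Theorem \ref{thm}.
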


We now investigate the impact of the nonlinearity of the switching manifold $\Sigma$ on the number of periodic orbits that can bifurcate from the invariant plane $\mathcal{S}$. To do so, we consider the function $f(x,y,z)=-d x^2 - c x y - y^2 + z$, where $c,d\in\{\pm1,0\}$, as illustrated in Figure \ref{Fig2}. In the forthcoming result, we characterize the submanifold of $\mathcal{S}$ that is foliated by crossing periodic orbits when $\varepsilon=0$ and $f(x,y,z)=-d x^2 - c x y - y^2 + z$. Additionally, we study the bifurcation of limit cycles by applying Theorem \ref{thm}.

\begin{figure}[H]
	\begin{center}
\subfigure[$(c,d)=(0,0)$]{
\begin{overpic}[width=4cm]{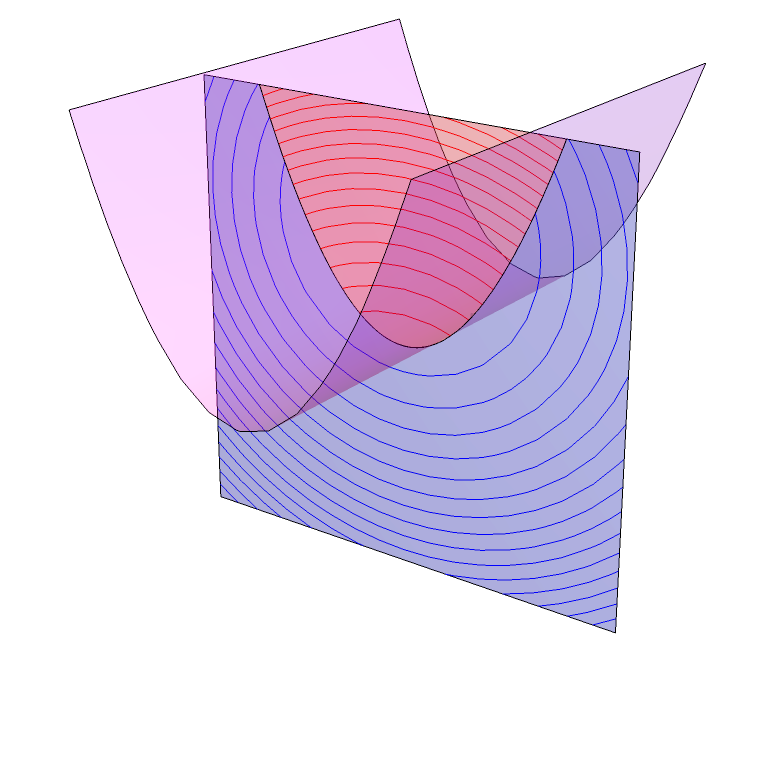}
  \put(90,80){$\Sigma$}
  \put(80,20){$\mathcal{S}$}
\end{overpic}}\qquad
\subfigure[$(c,d)=(0,-1)$]{
\begin{overpic}[width=4cm]{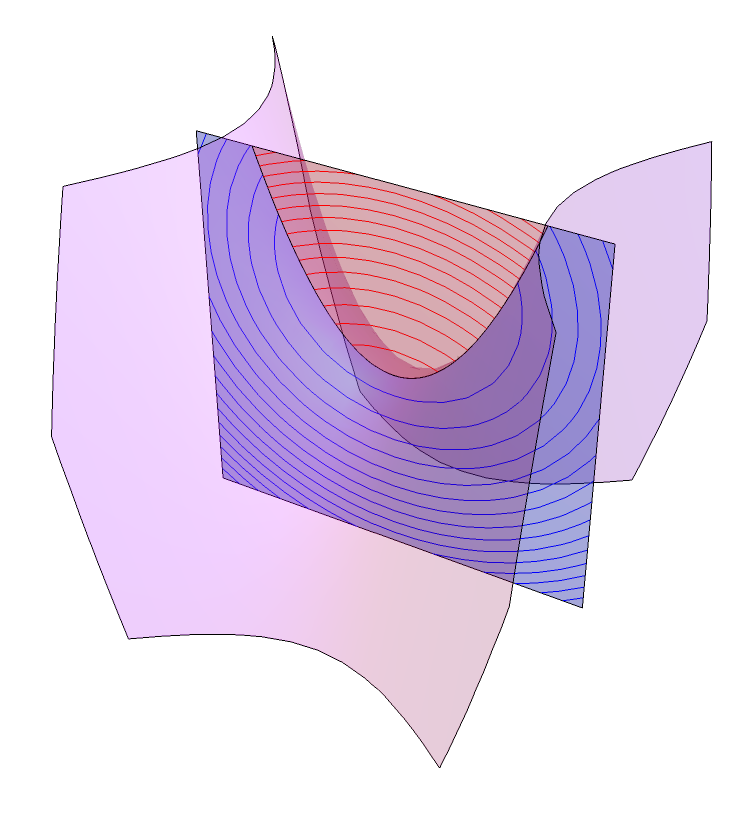}
 \put(90,80){$\Sigma$}
  \put(75,20){$\mathcal{S}$}
\end{overpic}}
\subfigure[$(c,d)=(0,1)$]{
\begin{overpic}[width=4cm]{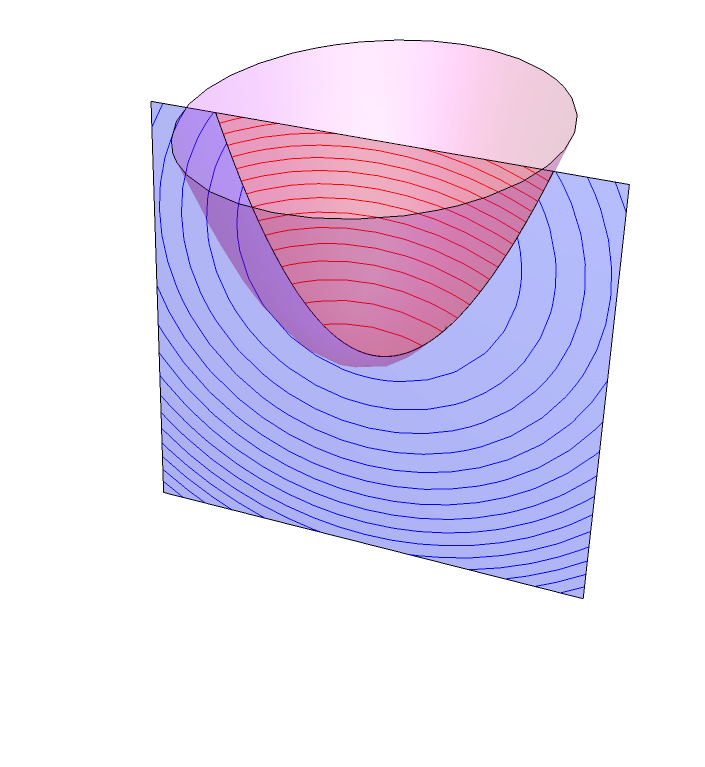}
 \put(76,80){$\Sigma$}
  \put(80,20){$\mathcal{S}$}
\end{overpic}}
\caption{ Phase Portrait of unperturbed differential system \eqref{example1} on the invariant plane $\mathcal{S}$, for $i=a$ and $f(x,y,z)=-d x^2 - c x y - y^2 + z$.}\label{Fig2}
\end{center}
\end{figure}	

\begin{theorem} \label{quadratic}
For $f(x,y,z)=-d x^2 - c x y - y^2 + z$,  $i=a$ and $\varepsilon=0$, unperturbed differential system \eqref{example1}  has an invariant plane at $\mathcal{S}=\{(x,y,z)\in \R^3: x=0\}$ containing a period annulus $\mathcal{A}=\mathcal{A}_1 \cup \mathcal{A}_2$  foliated by crossing periodic orbits, see Fig \ref{Fig3},  where
$$\mathcal{A}_j=
\left\{
\begin{array}{ll}
\{(0,y, z) \in \R^3: y^2+(z-1)^2>1, z\leq y^2\},& j=1,\\
\{(0,y, z) \in \R^3
 :  y^2+(z+1)^2>5, z\geq y^2\},& j=2.
\end{array}\right.$$
Furthermore, for $\varepsilon\neq 0$ sufficiently small, the differential system \eqref{example1} admits at least 7  limit cycles converging, when $\varepsilon\rightarrow0$, to some of the periodic orbits contained in the plane $\mathcal{S}=\{(x,y,z)\in \R^3: x=0\}.$
\end{theorem}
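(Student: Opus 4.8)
The plan is to treat the two assertions separately. For the description of the period annulus, I would restrict $X_{a,0}^\pm$ to $\mathcal{S}=\{x=0\}$, which is invariant because $\dot x=x$ vanishes there. On $\mathcal{S}$, in coordinates $(y,z)$, the field $X_{a,0}^+$ is the linear center $\dot y=-z-1,\ \dot z=y$ about $(0,-1)$ and $X_{a,0}^-$ is the linear center $\dot y=-z+1,\ \dot z=y$ about $(0,1)$, so their orbits are the circles $y^2+(z+1)^2=r_+^2$ and $y^2+(z-1)^2=r_-^2$. Since $f|_{\mathcal{S}}=z-y^2$, the switching set on $\mathcal{S}$ is the parabola $z=y^2$, which is symmetric about the $z$-axis; hence a circle centered on the $z$-axis meets it in a pair $(\pm\sqrt{z_0},z_0)$, and imposing that a $+$--circle and a $-$--circle share such a pair gives $r_+^2=z_0^2+3z_0+1$ and $r_-^2=z_0^2-z_0+1$. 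I would then check, computing intersection multiplicities and the Lie derivatives $X_{a,0}^{\pm}f$ at the two points, that concatenating the minor arc of the $+$--circle (contained in $\{z>y^2\}$) with the major arc of the $-$--circle (contained in $\{z<y^2\}$) yields a transversal crossing periodic orbit precisely for $z_0>1$, with $z_0=1$ the degenerate limit. As $z_0$ runs over $(1,\infty)$ one has $r_+^2>5$ and $r_-^2>1$, and these arcs sweep out exactly $\{z\ge y^2,\ y^2+(z+1)^2>5\}=\mathcal{A}_2$ and $\{z\le y^2,\ y^2+(z-1)^2>1\}=\mathcal{A}_1$, proving the first claim.

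For the bifurcation count I would recast \eqref{example1} in the framework of Section~\ref{sec:mf}. Writing the switching condition as $z=dx^2+cxy+y^2$ and interchanging the roles of the first two coordinates, one gets $h(x,y,z)=z-x^2-cxy-dy^2$, the one-parameter family of periodic orbits gets parametrized by $u$, and $\mathcal{Z}=\{(u,0,u^2):u\in\overline{\mathcal{V}}\}$, i.e. $v\equiv0$, for any compact interval $\overline{\mathcal{V}}\subset(1,\infty)$. Hypothesis \textbf{(H1)} is then checked with the explicit flows: on the invariant plane the $\pm$--subsystems are rigid rotations of period $2\pi$, so $\varphi_0^\pm(t,\cdot)$ are elementary combinations of $e^t,\cos t,\sin t$, the crossing times are $\tau^+(u)=\pi-2\arctan\frac{u^2+1}{u}$ and $\tau^-(u)=-\pi-2\arctan\frac{u^2-1}{u}$, and (a)--(e) amount to the sign computations already made. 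Since $\varphi_0^\pm\circ P^\pm(u)=(-u,0,u^2)$ for both signs, the quantities $\nabla h$, $X_0^\pm$ and $X_0^\pm h$ appearing in \eqref{beta} are explicit rational functions of $u$ (depending on $c$); $Y^\pm$ is assembled from $\partial\varphi_0^\pm/\partial y$ and $\partial\varphi_0^\pm/\partial z$; and $\omega^\pm$ is obtained from the variation-of-constants integral $\int_0^{\tau^\pm}(D\varphi_0^\pm)^{-1}X_1^\pm(\varphi_0^\pm)\,ds$, whose integrand is a combination of $e^{\pm s},\cos s,\sin s,1$ and can be integrated in closed form.

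After verifying that the perturbation-independent scalar $\Pi_2\cdot\boldsymbol\beta(u)$ is nonzero on $\overline{\mathcal{V}}$ (which also settles the hypothesis $\det(\Pi_2\boldsymbol\beta)\ne0$), formula \eqref{Mel} collapses---since here $n=m-n=1$---to $\mathcal{M}(u)=\frac{\alpha_1(u)\beta_2(u)-\alpha_2(u)\beta_1(u)}{\beta_2(u)}$, where $\alpha_i,\beta_i$ denote the components of $\boldsymbol\alpha(u),\boldsymbol\beta(u)$. Its numerator is \emph{linear} in the $24$ perturbation coefficients $\alpha_j^\pm,\beta_j^\pm,\kappa_j^\pm$, because $\boldsymbol\beta^\pm$ does not involve them while $\boldsymbol\alpha^\pm$ depends on them linearly through $\omega^\pm$. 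Each simple zero $u^*$ of this numerator in $\mathcal{V}$ satisfies the hypotheses of Theorem~\ref{thm}, hence produces a crossing limit cycle of \eqref{example1} tending to $L_{u^*}\subset\mathcal{S}$ as $\varepsilon\to0$, and distinct zeros give distinct limit cycles. It therefore remains to exhibit perturbation coefficients for which the numerator has at least $7$ simple zeros in $\overline{\mathcal{V}}$.

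I expect this last step to be the real difficulty. One must identify the finite-dimensional space of functions of $u$ spanned by the coefficients of the perturbation parameters in the numerator of $\mathcal{M}$---a mixture of rational functions of $u$ together with $\arctan\frac{u^2\pm1}{u}$ and exponentials of these arctangents, coming from $\tau^\pm(u)$---and produce a linear combination with $7$ simple zeros on a suitable interval, for instance by checking that an appropriate sub-collection of these functions is an extended complete Chebyshev system there (so that the sign pattern can be prescribed), or by displaying a concrete perturbation and bounding from below the number of its simple zeros via a Sturm- or Descartes-type argument. The derivation of the period annulus and the symbolic assembly of $\mathcal{M}$ are long but mechanical once the flows are written down; the genuine content lies in analysing the resulting transcendental bifurcation function.
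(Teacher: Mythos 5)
Your overall strategy coincides with the paper's: restrict to the invariant plane to identify the period annulus, pass to the coordinates of Section \ref{sec:mf} with $g(x,y)=x^2+cxy+dy^2$ and $\mathcal{Z}=\{(u,0,u^2)\}$, compute $\boldsymbol\alpha,\boldsymbol\beta$ from the explicit flows, and reduce to counting simple zeros of the scalar function \eqref{Mel}. Your treatment of the first assertion (the two families of circle arcs matched along the parabola $z=y^2$, with $z_0=1$ as the degenerate threshold giving $r_-^2>1$ and $r_+^2>5$) is in fact more detailed than the paper, which omits that part as straightforward; that portion is fine.

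The genuine gap is that the proof stops exactly where the theorem's content begins: the number $7$ is never derived. You reduce the problem to ``exhibit perturbation coefficients for which the numerator of $\mathcal{M}$ has at least $7$ simple zeros'' and then only list possible strategies. To close this one must (i) write the numerator as $\sum_{i=0}^{7}L_if_i(u)$ for an explicit ordered set of \emph{eight} functions $f_0,\dots,f_7$ (rational functions of $u$ combined with $e^{\tau^{\pm}(u)}$ and $\tau^{\pm}(u)$ themselves), (ii) verify that the linear map from the $24$ perturbation coefficients onto $(L_0,\dots,L_7)\in\R^8$ is surjective, and (iii) prove that $[f_0,\dots,f_7]$ is an ECT-system on some interval, so that Theorem \ref{t1} guarantees an element of the span with $7$ simple zeros there. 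The paper carries out (iii) by computing the eight Wronskians at $u=2$ and checking that $W_i(u)=k_i+\mathcal{O}(u-2)$ with $k_i\neq 0$; this is also why the interval $\mathcal{V}=(a_2,b_2)$ is taken with $2<a_2<b_2$ near $2$, a choice your proposal does not anticipate (you allow any compact subinterval of $(1,\infty)$, where the ECT property is not established). Both the dimension count ($8$ functions, hence at most and exactly $7$ zeros realizable) and the surjectivity of the parameter map are indispensable and absent; without them the stated lower bound of $7$ limit cycles does not follow.
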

\begin{figure}[H]
\begin{overpic}[width=1.5in]{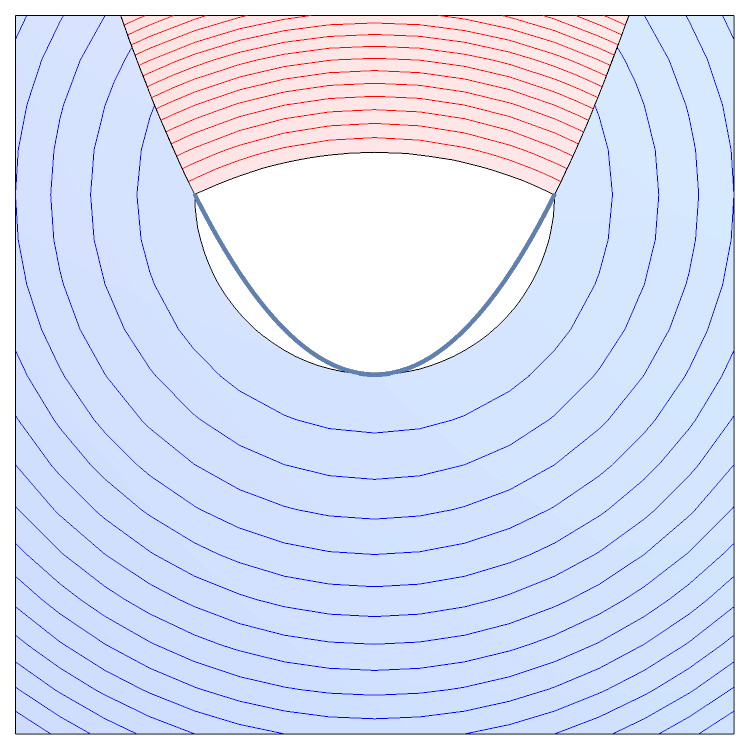}
 \put(78,80){$\Sigma$}
  \put(80,20){$\mathcal{S}$}
\end{overpic}
\caption{Annulus region on invariant plane of unperturbed differential system \eqref{example1} for $i=a$ and $f(x,y,z)=-d x^2 - c x y - y^2 + z.$}\label{Fig3}
\end{figure}	

\subsection{Perturbations of 3D piecewise quadratic systems with an isochronous paraboloid}\label{sec:ex2}
Consider the $3D$  piecewise quadratic polynomial differential system
\begin{equation}\label{example2}
\left(
\begin{array}{l}
\dot{x}\\
\dot{y}\\
\dot{z}\\
\end{array}\right)=
\left(
\begin{array}{c}
-y\\
x\\
\lambda(x^2+y^2-z)
\end{array}
\right)+\varepsilon
\left(
\begin{array}{l}
P_1(x,y,z)\\
 P_2(x,y,z)\\
P_3(x,y,z)
\end{array}\right),
\end{equation}
where
\begin{equation*}
P_{\ell}(x,y,z)=\left\{
\begin{array}{ll}
P_\ell^+(x,y,z)=\displaystyle \sum_{0\leq i+j+k\leq 2} p^{\ell,+}_{ijk} x^i y^jz^k, & f(x,y,z)>0,\\
\\
P_\ell^-(x,y,z)=\displaystyle \sum_{0\leq i+j+k\leq 2} p^{\ell,-}_{ijk} x^i y^jz^k,& f(x,y,z)<0,\\
\end{array}\right.
\end{equation*}
with $\lambda, p^{\pm}_{ijk}\in \R$, $\lambda\neq0$ and $f:\R^3\rightarrow \R$ such that $\Sigma=f^{-1}(0)$ is a manifold of codimension one of $\R^3$. 

It is notable that the unperturbed differential system \eqref{example2} possesses an isochronous invariant paraboloid $\mathcal{S}=\{(x,y,z): z=x^2+y^2\}$, meaning that $\mathcal{S}$ is foliated by periodic orbits with identical periods. Utilizing averaging theory, Llibre, Rebollo, and Torregrosa in \cite{LRT} examined the smooth case, wherein $P_i^-=P_i^+$ for $i\in{1,2,3}$, and determined that the differential system has at least $2$ limit cycles, which bifurcate from the periodic orbits of the invariant isochronous surface $\mathcal{S}$. Subsequently, in the nonsmooth case, we investigate the number of limit cycles that bifurcate from crossing periodic orbits in $\mathcal{S}$ if $f(x,y,z) \in \{y,y-z, y-x^2\}$. In the ensuing result, the proof of which is straightforward and will be omitted, we characterize the submanifold of $S$ that is foliated by crossing periodic orbits.

\begin{figure}[H]
	\begin{center}
\subfigure[$f(x,y,z)=y$]{
\begin{overpic}[width=1.5in]{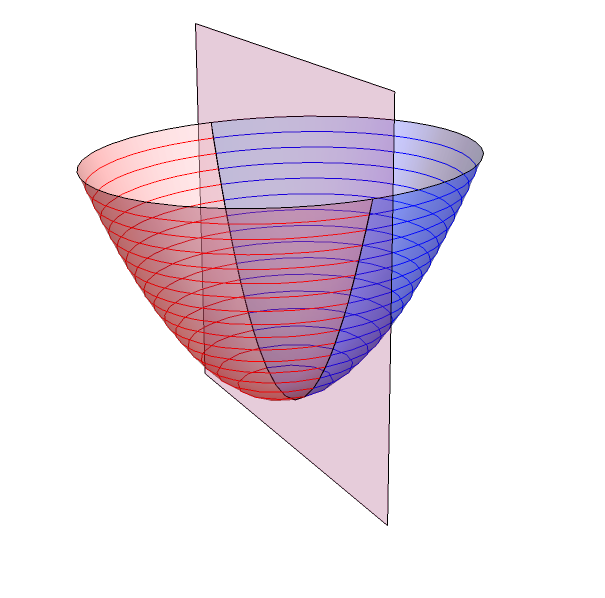}
 \put(65,15){$\Sigma$}
  \put(15,80){$\mathcal{S}$}
\end{overpic}}\quad
\subfigure[$f(x,y,z)=y-z$]{
\begin{overpic}[width=1.5in]{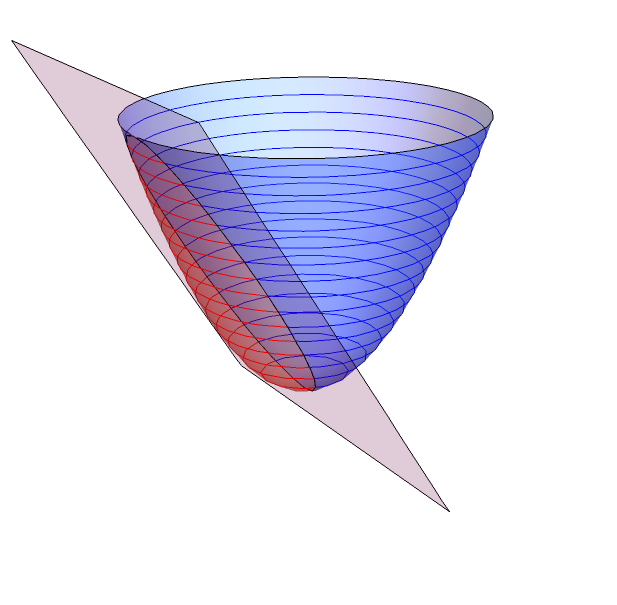}
\put(75,15){$\Sigma$}
  \put(35,87){$\mathcal{S}$}
\end{overpic}}\quad
\subfigure[$f(x,y,z)=y-x^2$]{
\begin{overpic}[width=1.5in]{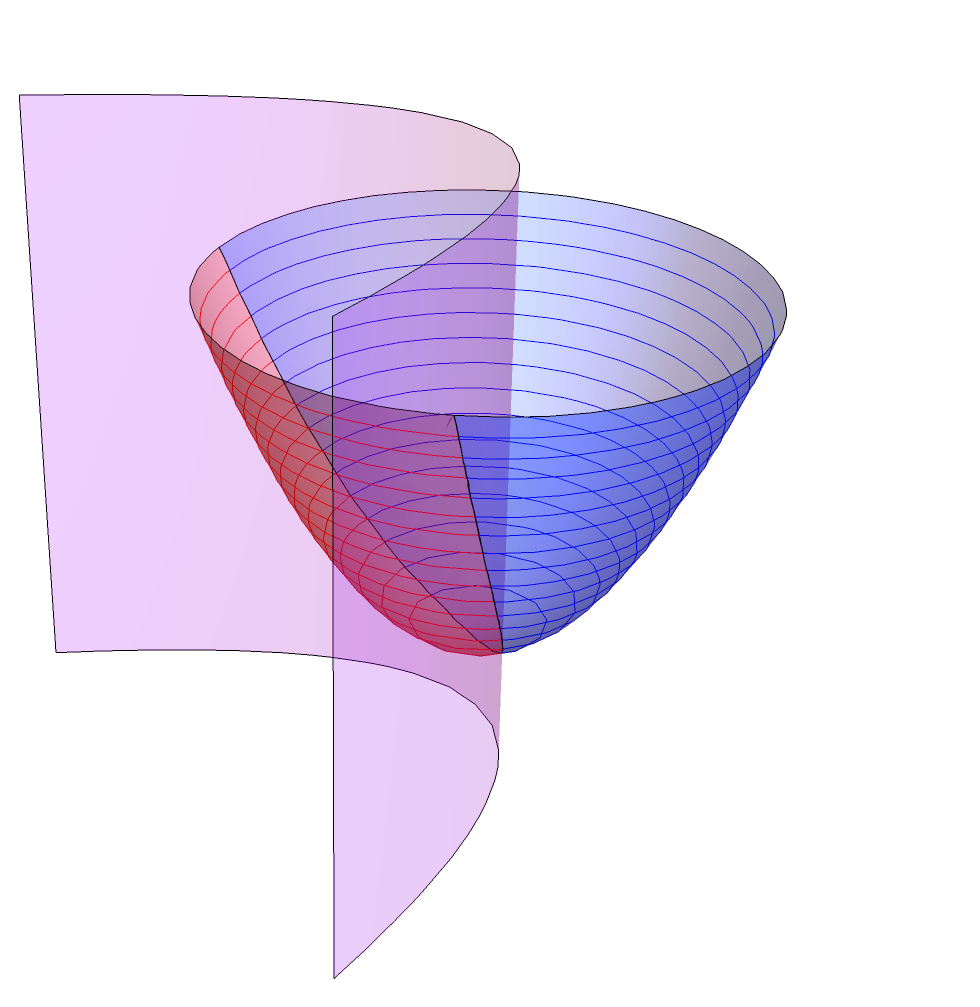}
\put(55,15){$\Sigma$}
  \put(65,80){$\mathcal{S}$}
\end{overpic}}
\caption{Submanifold of $S$ foliated by crossing periodic orbits for $f(x,y,z) \in \{y,y-z,  y-x^2\}$.}\label{fig5}
\end{center}
\end{figure}

\begin{proposition}\label{cpara}
For $f(x,y,z) \in \{y,y-z,  y-x^2\}$, the unperturbed differential system \eqref{example2} has a invariant manifold $\mathcal{T}^*$ of codimention one of $\R^3$ foliated by crossing periodic orbits  such that $\mathcal{T}^*\subset \mathcal{S}$, see Figure \ref{fig5}. Then we have the following statement,
\begin{enumerate}
\item For $f(x,y,z)=y$, $\Sigma \cap \mathcal{T}^* =\{(x,0,x^2)\in \R^3: x\in \R\},$
\item For $f(x,y,z)=y-z$, $\Sigma \cap \mathcal{T}^*=\{(x,y,y)\in \R^3:  (2x)^2 + (2y - 1)^2=1 \},$
\item For $f(x,y,z)=y-x^2$, $\Sigma \cap \mathcal{T}^*=\{(x,x^2,x^2+x^4)\in \R^3: x\in \R \}.$
\end{enumerate}
\end{proposition}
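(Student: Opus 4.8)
The plan is to integrate the unperturbed system explicitly and then translate the notion of a ``crossing periodic orbit'' into an elementary transversality condition along the circular orbits of the isochronous paraboloid. First I would integrate $(\dot x,\dot y,\dot z)=(-y,x,\lambda(x^2+y^2-z))$: the planar part is a harmonic oscillator, so $x(t)^2+y(t)^2\equiv x_0^2+y_0^2=:r^2$ is a first integral and $(x(t),y(t))$ is $2\pi$-periodic, while the remaining equation is the linear scalar equation $\dot z=\lambda(r^2-z)$ with solution $z(t)=r^2+e^{-\lambda t}(z_0-r^2)$. Consequently $\mathcal{S}=\{z=x^2+y^2\}$ is invariant and is foliated by the circular periodic orbits $\gamma_r=\{(r\cos\theta,r\sin\theta,r^2):\theta\in[0,2\pi)\}$, $r>0$, all of period $2\pi$, which already exhibits $\mathcal{S}$ as the isochronous paraboloid.

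Next I would record the relevant crossing criterion. Since in \eqref{example2} the discontinuity enters only at order $\varepsilon$, the unperturbed field $X_0$ is smooth, so $\gamma_r\subset\mathcal{S}$ is a crossing periodic orbit of \eqref{example2} with $\varepsilon=0$ exactly when the smooth $2\pi$-periodic function $t\mapsto f(\gamma_r(t))$ has only simple zeros, equivalently $(X_0f)(p)\neq0$ at every $p\in\gamma_r\cap\Sigma$: simplicity forces sign changes, hence an even number $\ge 2$ of genuine crossings and no sliding. The computation is light because $X_0$ restricts to $(-y,x,0)$ on $\mathcal{S}$, so $(X_0f)|_{\mathcal{S}}=-y\,\partial_xf+x\,\partial_yf$. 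Then $\mathcal{T}^*$ is the union of those $\gamma_r$ satisfying this condition, which is an open sub-union of whole orbits of $\mathcal{S}$ and hence a codimension-one invariant manifold, and $\Sigma\cap\mathcal{T}^*$ is obtained by imposing $z=x^2+y^2$ in $f=0$ together with the admissible range of $r$.

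Finally I would dispatch the three cases by substituting $\gamma_r(\theta)$ into $f=0$. For $f=y$ the zeros are $\theta\in\{0,\pi\}$ and $(X_0f)|_{\mathcal{S}}=x=\pm r\neq0$, so every $\gamma_r$ with $r>0$ crosses and $\Sigma\cap\mathcal{T}^*=\{(x,0,x^2):x\in\R\}$. For $f=y-z$ the condition is $\sin\theta=r$, which has two simple solutions precisely for $0<r<1$, where $(X_0f)|_{\mathcal{S}}=x=\pm r\sqrt{1-r^2}\neq0$ (and $r=1$ is a tangency); imposing $z=y$ and $x^2+y^2=y$ yields $(2x)^2+(2y-1)^2=1$. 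For $f=y-x^2$ the condition reads $\sin\theta=r\cos^2\theta$, and writing $s=\sin\theta$ this becomes $rs^2+s-r=0$, whose only root in $[-1,1]$ is $s_+=(\sqrt{1+4r^2}-1)/(2r)\in(0,1)$; this produces exactly two intersection points, at which $(X_0f)|_{\mathcal{S}}=x(1+2y)$ with $1+2y=1+2rs_+>0$ and $x\neq0$, so again every $\gamma_r$ with $r>0$ crosses and $\Sigma\cap\mathcal{T}^*=\{(x,x^2,x^2+x^4):x\in\R\}$.

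The proof is essentially bookkeeping once the flow is written down; the only genuinely delicate points I expect are, in the third case, verifying that $s_+$ is the unique admissible root and that it stays strictly inside $(0,1)$ for all $r>0$ (so transversality never degenerates), and, in the second case, the careful treatment of the tangential radius $r=1$ — i.e.\ whether one takes $\mathcal{T}^*$ as the union over $0<r<1$ or its closure, the latter being what makes $\Sigma\cap\mathcal{T}^*$ the full circle $(2x)^2+(2y-1)^2=1$ appearing in the statement.
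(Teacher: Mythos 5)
Your argument is correct, and it is exactly the computation the authors had in mind: the paper explicitly omits the proof as ``straightforward,'' so there is nothing to compare against beyond the intended route, which you follow --- integrate the unperturbed flow to exhibit the circles $\gamma_r=\{(r\cos\theta,r\sin\theta,r^2)\}$ foliating $\mathcal{S}$, and reduce ``crossing'' to $(X_0f)\neq 0$ at the points of $\gamma_r\cap\Sigma$, which on $\mathcal{S}$ simplifies to $-y\,\partial_xf+x\,\partial_yf\neq 0$ since the $\dot z$-component vanishes there. The three case analyses check out, including the uniqueness of the admissible root $s_+=(\sqrt{1+4r^2}-1)/(2r)\in(0,1)$ in the case $f=y-x^2$. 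The boundary subtlety you flag is real but is an imprecision in the proposition as stated rather than in your proof: strictly, the point $x=0$ in item (1) (the equilibrium at the vertex) and the points of $(2x)^2+(2y-1)^2=1$ with $x=0$ in item (2) (corresponding to $r=0$ and to the tangency at $r=1$) do not lie on crossing periodic orbits, so $\Sigma\cap\mathcal{T}^*$ is the stated set minus those finitely many points unless one passes to the closure; this has no bearing on the subsequent use of the proposition, since Theorem \ref{para} only works over open subintervals $\mathcal{V}$ bounded away from these degeneracies.
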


\begin{theorem}\label{para}
For $f(x,y,z) \in \{y,y-z,  y-x^2\}$and  $\varepsilon\neq 0$ sufficiently small,  the differential system \eqref{example1} admits at least $\ell_h$  limit cycle converging, when $\varepsilon\rightarrow0$, to some of the crossing periodic orbits contained in $\mathcal{S}$, where
\begin{enumerate}
\item$\ell_h=4$,  if $f(x,y,z)=y$,
\item $\ell_h=6$,  if $f(x,y,z)=y-z$,
\item $\ell_h=8$,  if $f(x,y,z)=y-x^2$.
\end{enumerate}
\end{theorem}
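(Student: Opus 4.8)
\textbf{Proof proposal for Theorem \ref{para}.}
The plan is to apply Theorem \ref{thm} to the differential system \eqref{example2} for each of the three choices of switching function $f$, reducing the count of limit cycles to a count of simple zeros of the associated bifurcation function $\mathcal{M}$. First I would verify hypothesis \textbf{(H1)} in each case, using the explicit flow of the unperturbed system: since $(\dot{x},\dot{y})=(-y,x)$ is a harmonic oscillator and $\dot{z}=\lambda(x^2+y^2-z)$ is linear in $z$ once $x^2+y^2$ is known, the flow $\varphi_0^{\pm}$ can be written in closed form. On the invariant paraboloid $\mathcal{S}=\{z=x^2+y^2\}$ the $z$-equation degenerates and the orbits are the circles $x^2+y^2=\text{const}$ lifted to $\mathcal{S}$, all of period $2\pi$; hence the crossing periodic orbits live on the submanifold $\mathcal{T}^*\subset\mathcal{S}$ described in Proposition \ref{cpara}. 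Here $n=1$ (the annulus of periodic orbits is parametrized by a single parameter $u$, essentially the radius, after using $f$ to pick out $\mathcal{T}^*$), $m-n=1$, and the transversality conditions \textbf{(H1)(d)--(e)} amount to checking that the harmonic flow crosses the level set $f=0$ transversally, which fails only on a measure-zero set that we exclude from $\mathcal{V}$. I would also record the crossing times $\tau^{\pm}(u)$: for the cases $f=y$ and $f=y-x^2$ the return to $\Sigma$ occurs after half a period by symmetry, giving $\tau^+(u)=\pi=-\tau^-(u)$, whereas for $f=y-z$ one must solve a transcendental condition, but still in closed enough form to proceed.

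Next I would compute the bifurcation function $\mathcal{M}(u)$ from \eqref{Mel}–\eqref{beta}. This requires: (i) the fundamental matrix $D_{(x,y,z)}\varphi_0^{\pm}(t,\cdot)$, which is block lower-triangular because the $(x,y)$-subsystem is autonomous and decoupled from $z$, so its inverse and the integral defining $\omega^{\pm}$ are explicit; (ii) the derivatives $\partial\varphi_0^{\pm}/\partial y$ and $\partial\varphi_0^{\pm}/\partial z$ entering $Y^{\pm}$, together with $\partial g/\partial y$ for the given $f$; and (iii) the gradient $\nabla h$ where $h=z-g$. Since $X_1^{\pm}=(P_1^{\pm},P_2^{\pm},P_3^{\pm})$ are the general quadratic perturbations, $\omega^{\pm}\circ P^{\pm}(u)$ will be a vector whose components are explicit functions of $u$ and linear in the coefficients $p^{\ell,\pm}_{ijk}$. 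Assembling $\boldsymbol{\alpha}^{\pm}$, $\boldsymbol{\beta}^{\pm}$ and then the scalar projections $\Pi_1\cdot(\cdots)$, $\Pi_2\cdot\boldsymbol{\beta}(u)$ (which is a nonzero scalar, so the genericity hypothesis $\det(\Pi_2\cdot\boldsymbol{\beta}(u))\neq0$ of Theorem \ref{thm} reduces to a nonvanishing check), I expect $\mathcal{M}(u)$ to come out as a rational function of $u$ (or of $\cos$, $\sin$ of $u$-dependent angles, which after a Weierstrass/rational substitution becomes rational) whose numerator is a polynomial in a suitable variable $w=w(u)$, with coefficients that are free linear combinations of the perturbation parameters. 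The degree of that numerator — after clearing denominators and accounting for which monomials genuinely appear — is what yields $\ell_h=4,6,8$ respectively; the growth of the degree with the complexity of $f$ (linear $\to$ linear-in-$z$ $\to$ quadratic-in-$x$) is precisely the mechanism by which the nonlinear switching manifold raises the number of limit cycles.

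Finally, having $\mathcal{M}(u)$ as (up to a nonvanishing factor) a polynomial of the relevant degree whose coefficients can be set independently by choosing the $p^{\ell,\pm}_{ijk}$, I would exhibit an explicit choice of perturbation coefficients for which this polynomial has $\ell_h$ simple real roots $u^*$ inside $\mathcal{V}$; for each such root $\det(D_u\mathcal{M})(u^*)=\mathcal{M}'(u^*)\neq0$ by simplicity, so Theorem \ref{thm} produces a crossing limit cycle bifurcating from the orbit through $(u^*,v(u^*),g(u^*,v(u^*)))$, and distinct roots give distinct limit cycles. Summing over the (at most two) connected components of the crossing period annulus if necessary gives the stated totals. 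The main obstacle I anticipate is case (2), $f=y-z$: there the crossing times $\tau^{\pm}(u)$ are not half-periods, so the flow must be evaluated at $u$-dependent times determined implicitly by $y(\tau)=z(\tau)$, making the closed-form expression for $\mathcal{M}(u)$ considerably messier and the identification of the effective polynomial degree $6$ (rather than something larger that then collapses, or smaller) the delicate point; controlling which coefficients of that polynomial are actually independent — i.e. that the linear map from perturbation parameters to polynomial coefficients is surjective onto the relevant coefficient space — is the crux, and I would handle it by a direct rank computation on the explicit coefficient formulas.
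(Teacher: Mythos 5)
Your overall architecture matches the paper's: change coordinates so the switching manifold takes the form $z=g(x,y)$, verify \textbf{(H1)}, compute $\boldsymbol{\beta}(u)$ and $\mathcal{M}(u)$ from \eqref{Mel}--\eqref{beta}, check that the vector of coefficients multiplying the generating functions is attained surjectively by the perturbation parameters, and invoke Theorem \ref{thm} at each simple zero. However, there is a genuine gap in the step that actually produces the numbers $4,6,8$: you assert that $\mathcal{M}(u)$ is, up to a nonvanishing factor and a rational (Weierstrass-type) substitution, a polynomial whose degree gives $\ell_h$. That is true only in case (1), where the paper indeed obtains $L_0+L_1u+\cdots+L_4u^4$ and applies Descartes' rule. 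In cases (2) and (3) the crossing times are $u$-dependent ($\tau_1(u)=\cos^{-1}(-\sqrt{1-4u^2})$ and $\tau_1(u)=\tan^{-1}(2u/(u^2-1))$, respectively), and $\mathcal{M}(u)$ is a linear combination of $7$, resp.\ $9$, functions containing $\sqrt{1-4u^2}$, $\cos^{-1}$ and $\tan^{-1}$ factors; these are transcendental in $u$ and cannot be rationalized away, so no polynomial degree count is available. The paper closes this step with the ECT-system machinery: it verifies that the Wronskians $W_0,\dots,W_6$ (resp.\ $W_0,\dots,W_8$) of the ordered families $\F_4$ (resp.\ $\F_5$) do not vanish on $\mathcal{V}$, so that Theorem \ref{t1} guarantees that $6$ (resp.\ $8$) simple zeros are realizable. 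Your fallback of ``exhibiting an explicit choice of perturbation coefficients'' could in principle substitute for Theorem \ref{t1}, but you give no mechanism for producing such a choice for these transcendental families, and without linear independence plus a realizability argument the lower bounds $6$ and $8$ remain unproved.

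A secondary factual error: you claim that for $f=y-x^2$ the return to $\Sigma$ occurs after half a period ``by symmetry'', i.e.\ $\tau^{\pm}(u)=\pm\pi$. This holds only for $f=y$, where the crossing points on each circle $x^2+y^2=r^2$ are $(\pm r,0)$. For $f=y-x^2$ the two crossing points both lie at the same positive height $y$ determined by $y+y^2=r^2$, so the two arcs joining them have unequal lengths and the crossing times are the $u$-dependent quantities above. This is not a cosmetic issue: the $u$-dependence of $\tau^{\pm}$ is exactly what generates the extra transcendental generators (the terms carrying $\tan^{-1}(2u/(u^2-1))$) and hence the larger count $\ell_h=8$; assuming half-period symmetry there would collapse the family and yield the wrong bound.
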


The remainder of the paper is organized as follows. In Section \ref{SecT1}, we provide the proof of Theorem \ref{thm}. Section \ref{Ex}  contains the proofs of Theorems \ref{p1}, \ref{quadratic}, and \ref{para}.

   \section{Proof of Theorem \ref{thm}} \label{SecT1}

In the next lemma, we present at particular case of the Lyapunov–Schmidt reduction method that we shall need for proving the
main results of this paper. For more details, see \cite{JN3}.

\begin{lemma}\label{LS}
 Assume that $m \geq n$ are positive integers. Let $\mathcal{D}$ and $\mathcal{V}$ be open subsets of $\R^m$ and $\R^n$, respectively. Let  ${\bf g}_0, {\bf g}_1$ and $v:\overline{ \mathcal{V}} \rightarrow \R^{m-n}$ be $\mathcal{C}^2$ functions, consider 
${\bf g} : \mathcal{D} \times (-\varepsilon_0, \varepsilon_0) \rightarrow \R^m$ as
$${\bf g} (z, \varepsilon) = {\bf g} _0(z) +\varepsilon {\bf g} _1(z)+\mathcal{O}(\varepsilon^2),$$
and take $ \mathcal{Z} = \{z_u = (u, v(u)) : u \in \overline{\mathcal{V}} \}\subset \mathcal{D}$. We denote by $\Gamma_u$ the upper right corner $n \times (m -n)$ matrix of $D_z{\bf g} _0(z_u)$, and by $\Delta_u$, the lower right corner $(m -n) \times (m-n)$ matrix of $D_z\bf{g}_0(z_u)$. Assume that for each $z_u\in \mathcal{Z}$, $\det(\Delta_u)\neq 0$ and ${\bf g} _0(z_u)=0$. We define the bifrucation function the functions $f_1 :\overline{ \mathcal{V}} \rightarrow \R^n$ as
\begin{equation}\label{fLS}
f_1(u)=-\Gamma_u (\Delta_u)^{-1} \pi_2(g_1(z_u)))+\pi_1(g_1(z_u)).
\end{equation}
 If there exists $u^{*} \in \mathcal{V}$ with $f_1(u^{*})=0$ and $\det(D f_1(u^*)) \neq 0$, then there exists $u_\varepsilon$ such
that  $g(z_{u_\varepsilon}, \varepsilon)=0$ and $z_{u_{\varepsilon}} \rightarrow z_{u^{*}}$ when $\varepsilon \rightarrow 0$.
\end{lemma}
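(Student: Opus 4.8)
The plan is to apply the classical implicit function theorem to the two ``blocks'' into which the equation ${\bf g}(z,\varepsilon)=0$ naturally decomposes once we write $z=(u,w)\in\R^n\times\R^{m-n}$. First I would set $F(u,w,\varepsilon)={\bf g}((u,w),\varepsilon)$ and split it as $F=(F_1,F_2)$ according to the splitting $\R^m=\R^n\times\R^{m-n}$ of the target, so that $F_1$ carries the $\pi_1$-component and $F_2$ the $\pi_2$-component. The hypothesis ${\bf g}_0(z_u)=0$ for all $u\in\overline{\mathcal V}$ says that the whole manifold $\mathcal Z$ is a zero set at $\varepsilon=0$; differentiating the identity ${\bf g}_0(u,v(u))\equiv 0$ in $u$ gives a relation between the derivative blocks of $D_z{\bf g}_0$ and $Dv(u)$, which is precisely what makes the lower block solvable. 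Concretely, since $\det(\Delta_u)\neq 0$ on the compact set $\{z_u:u\in\overline{\mathcal V}\}$, I would first solve $F_2(u,w,\varepsilon)=0$: because $D_wF_2(u,v(u),0)=\Delta_u$ is invertible and $F_2(u,v(u),0)=0$, the implicit function theorem yields a $\mathcal C^1$ function $w=W(u,\varepsilon)$, defined for $u$ near any fixed point of $\overline{\mathcal V}$ and $|\varepsilon|$ small, with $W(u,0)=v(u)$ and $F_2(u,W(u,\varepsilon),\varepsilon)=0$.

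Next I would substitute this back and study the reduced (bifurcation) equation $\mathcal F(u,\varepsilon):=F_1(u,W(u,\varepsilon),\varepsilon)=0$. Since $F_1(u,v(u),0)=0$ identically, $\mathcal F(u,0)\equiv 0$, so $\mathcal F$ is divisible by $\varepsilon$: write $\mathcal F(u,\varepsilon)=\varepsilon\,\widetilde{\mathcal F}(u,\varepsilon)$ with $\widetilde{\mathcal F}$ of class $\mathcal C^1$ and $\widetilde{\mathcal F}(u,0)=\partial_\varepsilon \mathcal F(u,0)$. The key computation is to identify $\widetilde{\mathcal F}(u,0)$ with the function $f_1(u)$ in \eqref{fLS}. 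By the chain rule,
\[
\partial_\varepsilon \mathcal F(u,0)=\partial_\varepsilon F_1(u,v(u),0)+D_wF_1(u,v(u),0)\cdot\partial_\varepsilon W(u,0),
\]
where $\partial_\varepsilon F_1(u,v(u),0)=\pi_1({\bf g}_1(z_u))$ and $D_wF_1(u,v(u),0)=\Gamma_u$, while differentiating $F_2(u,W(u,\varepsilon),\varepsilon)=0$ in $\varepsilon$ at $\varepsilon=0$ gives $\Delta_u\cdot\partial_\varepsilon W(u,0)+\pi_2({\bf g}_1(z_u))=0$, i.e. $\partial_\varepsilon W(u,0)=-(\Delta_u)^{-1}\pi_2({\bf g}_1(z_u))$. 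Combining these yields exactly $\partial_\varepsilon\mathcal F(u,0)=-\Gamma_u(\Delta_u)^{-1}\pi_2({\bf g}_1(z_u))+\pi_1({\bf g}_1(z_u))=f_1(u)$, so $\widetilde{\mathcal F}(u,0)=f_1(u)$ and $D_u\widetilde{\mathcal F}(u^*,0)=Df_1(u^*)$.

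Finally, the hypotheses $f_1(u^*)=0$ and $\det(Df_1(u^*))\neq 0$ say that $\widetilde{\mathcal F}(u^*,0)=0$ with $D_u\widetilde{\mathcal F}(u^*,0)$ invertible, so one more application of the implicit function theorem to $\widetilde{\mathcal F}(u,\varepsilon)=0$ produces $u_\varepsilon$ with $u_\varepsilon\to u^*$ as $\varepsilon\to 0$ and $\widetilde{\mathcal F}(u_\varepsilon,\varepsilon)=0$; hence $\mathcal F(u_\varepsilon,\varepsilon)=0$ and, setting $z_{u_\varepsilon}=(u_\varepsilon,W(u_\varepsilon,\varepsilon))$, we get ${\bf g}(z_{u_\varepsilon},\varepsilon)=0$ with $z_{u_\varepsilon}\to z_{u^*}$. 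The main technical point — not an obstacle so much as the place where care is needed — is the passage from the pointwise implicit function theorem to a statement valid on a neighborhood of $u^*$ with the stated uniformity, together with the $\mathcal C^2$ regularity of $v$ being exactly what guarantees $W$ is $\mathcal C^1$ and hence that $\widetilde{\mathcal F}$ is $\mathcal C^1$ so that its derivative at $u^*$ is meaningful; the ``division by $\varepsilon$'' step should be justified via Taylor's theorem with integral remainder rather than a bare factorization.
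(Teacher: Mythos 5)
The paper does not actually prove Lemma \ref{LS}; it is stated as a known particular case of the Lyapunov--Schmidt reduction and the reader is referred to \cite{JN3}. Your argument is a correct and complete rendition of exactly that standard proof: solving the lower block $F_2=0$ by the implicit function theorem using $\det(\Delta_u)\neq 0$, reducing to $\mathcal F(u,\varepsilon)=F_1(u,W(u,\varepsilon),\varepsilon)$, factoring out $\varepsilon$, identifying $\partial_\varepsilon\mathcal F(u,0)$ with $f_1(u)$, and applying the implicit function theorem once more at $u^*$. The only point worth flagging is notational: the zero you produce is $(u_\varepsilon,W(u_\varepsilon,\varepsilon))$, which need not lie on $\mathcal Z$, so the lemma's ``$z_{u_\varepsilon}$'' must be read as this branch of zeros converging to $z_{u^*}$ rather than literally as $(u_\varepsilon,v(u_\varepsilon))$ --- your reading is the correct one and is consistent with how the lemma is used in the proof of Theorem \ref{thm}.
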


Let $\varphi^{\pm}(t, x,y,z, \varepsilon)$ the solution of 
 \begin{equation*} 
        \left(\dot{x}, \dot{y}, \dot{z}\right)^T=X_0^\pm(x,y,z)+\varepsilon X_1^\pm (x,y,z)+\varepsilon^{2}R^\pm (x,y,z,\varepsilon), 
    \end{equation*}
 with $\varphi^{\pm}(0, x,y,z, \varepsilon)=(x,y,z)$. Then, $\varphi^{\pm}(t, x,y,z, 0)=\varphi^{\pm}_0(t, x,y,z)$. From hypothesis {\bf (H1)}, the solution of  \eqref{s0}, for $\varepsilon=0$, starting at $(u, v(u), g(u, v(u)))$ reaches transversely the set of discontinuity. Therefore, for a neighborhood $\mathcal{W} \subset D_1\times D_2$ of $\mathcal{U}$ and $|\varepsilon|\neq 0$ small enough there exists a
time $t^+(x, y, \varepsilon) > 0$ with $t^+(u,v(u),0)=\tau^+(u)$ (resp. $t^-(x, y, \varepsilon) < 0$ with $t^-(u,v(u),0)=\tau^-(u)$) such that an trajectory of  \eqref{s0} starting in $(x, y, g(x,y))\in\text{Graphs}(g|_{\mathcal{W}})$ returns, forward in time (resp. backward in time), to $\Sigma$, that is
$$( h\circ \varphi^{\pm})(t^{\pm}(x,y,\varepsilon),x,y,g(x,y),\varepsilon)=0.$$
The next propositions provide the expressions of the partial derivates of $\varphi^{\pm}(t,x,y,z, \varepsilon)$ and $t^{\pm}(x,y,\varepsilon)$ at $\varepsilon=0$, in terms of the  solutions of unperturbed differential system \eqref{s0}.

\begin{proposition}\label{derflux} Let $(x,y,z)\in D$. Then
$$\dfrac{ \partial\varphi^{\pm}}{\partial \varepsilon}(t,x,y,z,0)= \omega^{\pm} (t,x,y,z)$$
where
 $$\omega^{\pm} (t,x,y,z)=D_{(x,y,z)} \varphi^{\pm}_0(t,x,y,z)\int_{0}^t (D_{(x,y,z)} \varphi^{\pm}_0(s,x,y,z))^{-1} X_1^\pm ( \varphi^{\pm}_0(s,x,y,z))ds.$$
\end{proposition}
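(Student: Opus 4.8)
The plan is to prove Proposition~\ref{derflux} by differentiating the variational (flow) equation with respect to the parameter $\varepsilon$, which is the standard device for computing the sensitivity of a flow to a parameter. First I would fix $(x,y,z)\in D$ and write $\psi^\pm(t,\varepsilon):=\varphi^\pm(t,x,y,z,\varepsilon)$, so that $\psi^\pm$ solves
\begin{equation*}
\frac{\partial \psi^\pm}{\partial t}(t,\varepsilon) = X_0^\pm(\psi^\pm(t,\varepsilon)) + \varepsilon X_1^\pm(\psi^\pm(t,\varepsilon)) + \varepsilon^2 R^\pm(\psi^\pm(t,\varepsilon),\varepsilon),
\qquad \psi^\pm(0,\varepsilon)=(x,y,z).
\end{equation*}
Since $X_0^\pm$, $X_1^\pm$, $R^\pm$ are $\mathcal{C}^1$ (indeed the hypotheses make everything smooth enough), standard smooth dependence on parameters guarantees that $\psi^\pm$ is $\mathcal{C}^1$ in $(t,\varepsilon)$; set $u^\pm(t):=\frac{\partial \psi^\pm}{\partial \varepsilon}(t,0)$. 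Differentiating the ODE above with respect to $\varepsilon$ and evaluating at $\varepsilon=0$ (the $\varepsilon^2 R^\pm$ term and the $\varepsilon \,\partial_\varepsilon$-part of $\varepsilon X_1^\pm$ both drop out) yields the linear inhomogeneous variational equation
\begin{equation*}
\dot{u}^\pm(t) = D X_0^\pm(\varphi_0^\pm(t,x,y,z))\, u^\pm(t) + X_1^\pm(\varphi_0^\pm(t,x,y,z)),
\qquad u^\pm(0)=0,
\end{equation*}
where I used $\psi^\pm(t,0)=\varphi_0^\pm(t,x,y,z)$ and $\psi^\pm(0,\varepsilon)\equiv(x,y,z)$ so the initial condition is $\varepsilon$-independent.

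Next I would solve this linear equation by variation of parameters. The fundamental matrix solution of the homogeneous part $\dot{w} = D X_0^\pm(\varphi_0^\pm(t,x,y,z))\, w$ normalized to the identity at $t=0$ is exactly $\Phi^\pm(t):=D_{(x,y,z)}\varphi_0^\pm(t,x,y,z)$, since differentiating the unperturbed flow identity $\partial_t \varphi_0^\pm = X_0^\pm(\varphi_0^\pm)$ with respect to the initial condition gives $\dot{\Phi}^\pm = DX_0^\pm(\varphi_0^\pm)\,\Phi^\pm$ with $\Phi^\pm(0)=\mathrm{Id}_{m+1}$; moreover $\Phi^\pm(t)$ is invertible for all $t$ in the relevant interval by Liouville's formula. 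Then the unique solution of the variational IVP with $u^\pm(0)=0$ is
\begin{equation*}
u^\pm(t) = \Phi^\pm(t) \int_0^t \bigl(\Phi^\pm(s)\bigr)^{-1} X_1^\pm(\varphi_0^\pm(s,x,y,z))\, ds,
\end{equation*}
which is precisely $\omega^\pm(t,x,y,z)$ as defined in the statement (and in Section~\ref{sec:mf}). This establishes the claimed identity $\frac{\partial\varphi^\pm}{\partial\varepsilon}(t,x,y,z,0)=\omega^\pm(t,x,y,z)$.

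I do not anticipate a genuine obstacle here: the only points requiring a modicum of care are (i) justifying that $\varphi^\pm$ is $\mathcal{C}^1$ jointly in $(t,x,y,z,\varepsilon)$ so that mixed partials exist and the interchange of $\partial_t$ and $\partial_\varepsilon$ is legitimate — this follows from the classical theorem on differentiable dependence of solutions on parameters, given the regularity assumed on $X_0^\pm$, $X_1^\pm$, $R^\pm$; and (ii) noting that the computation is carried out separately on each side $z\gtrless g(x,y)$, where the vector field coincides with the smooth extension $X^\pm_0+\varepsilon X_1^\pm+\varepsilon^2 R^\pm$, so no switching behavior enters the argument for $\varphi^\pm$ — the crossing is handled later through $t^\pm(x,y,\varepsilon)$. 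Thus the proof is essentially a textbook application of the variation-of-parameters formula to the first-order variational equation, and I would keep it short.
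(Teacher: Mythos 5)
Your argument is correct and is essentially the paper's own: the paper likewise observes that $\partial_\varepsilon\varphi^\pm(\cdot,x,y,z,0)$ and $\omega^\pm(\cdot,x,y,z)$ both satisfy the inhomogeneous variational initial value problem $\dot w = DX_0^\pm(\varphi_0^\pm)\,w + X_1^\pm(\varphi_0^\pm)$, $w(0)=0$, and concludes by uniqueness, which is exactly what your variation-of-parameters derivation makes explicit. The only difference is presentational — the paper asserts that $\omega^\pm$ solves the IVP while you construct it as the unique solution via the fundamental matrix $D_{(x,y,z)}\varphi_0^\pm$.
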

\begin{proof}
It is worth noting that the functions $w_1^\pm (t)=\frac{\partial\varphi^{\pm}}{\partial \varepsilon}(t,x,y,z,0)$ and $w^\pm_0(t)=\omega^\pm(t,x,y,z)$ are solutions of the initial value problem
\begin{equation*}
\left\{
\begin{aligned}
\dot{w}&=D_{(x,y,z)}X_0^{\pm}(\varphi^{\pm}(t,x,y,z,0)) w+X_1^{\pm}(\varphi^{\pm}(t,x,y,z,0)),\\
w(0)&=0.
\end{aligned}
\right.
\end{equation*}
The proof of this proposition follows by applying the Existence and Uniqueness Theorem.
\end{proof}

\begin{proposition}\label{dertem} 
Let $(x,y)\in \mathcal{W}$. Then,
\begin{equation*}
\begin{aligned}
\frac{\partial t^{\pm}}{\partial \varepsilon}(x,y,0)&=-\frac{\nabla h( \varphi^{\pm}_0(t^{\pm}(x,y,0),x,y,g(x,y))) \cdot \omega^{\pm}(t^{\pm}(x,y,0),x,y,g(x,y))}{X_0^{\pm}h( \varphi_0^{\pm}(t^{\pm}(x,y,0),x,y,g(x,y))) },\\
\\
\frac{\partial t^{\pm}}{\partial y}(x,y,0)&=-\frac{\nabla h( \varphi^{\pm}_0(t^{\pm}(x,y,0),x,y,g(x,y)))\cdot Y^{\pm}(t^{\pm}(x,y,0),x,y,g(x,y))}{X_0^{\pm}h( \varphi_0^{\pm}(t^{\pm}(x,y,0),x,y,g(x,y))) }
\end{aligned}
\end{equation*}
where
\[
\omega^\pm(t,x,y,z)= D_{(x,y,z)} \varphi^{\pm}_0(t,x,y,z)\int_{0}^t (D_{(x,y,z)} \varphi^{\pm}_0(s,x,y,z))^{-1}\cdot X_1^\pm (\varphi^{\pm}_0(s,x,y,z))ds
\]
and
\[
Y^\pm(t,x,y,z) = \frac{\partial \varphi_0^\pm}{\partial y}(t,x,y,z) + \frac{\partial \varphi_0^\pm}{\partial z}(t,x,y,z) \cdot \frac{\partial g}{\partial y}(x,y).
\]
\end{proposition}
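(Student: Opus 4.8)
The statement to prove is Proposition~\ref{dertem}, which gives the $\varepsilon$- and $y$-derivatives of the crossing time $t^{\pm}(x,y,\varepsilon)$ at $\varepsilon=0$. The plan is to apply the Implicit Function Theorem to the defining relation
\[
F^{\pm}(t,x,y,\varepsilon) := (h\circ\varphi^{\pm})\bigl(t,x,y,g(x,y),\varepsilon\bigr) = 0,
\]
which holds identically for $(x,y)$ in $\mathcal{W}$ and $|\varepsilon|$ small by the discussion preceding the proposition, with $t = t^{\pm}(x,y,\varepsilon)$ the solution branch satisfying $t^{\pm}(u,v(u),0)=\tau^{\pm}(u)$.

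First I would record that, by the chain rule,
\[
\frac{\partial F^{\pm}}{\partial t}(t,x,y,\varepsilon)
= \nabla h\bigl(\varphi^{\pm}(t,x,y,g(x,y),\varepsilon)\bigr)\cdot
\frac{\partial \varphi^{\pm}}{\partial t}(t,x,y,g(x,y),\varepsilon),
\]
and since $\partial_t\varphi^{\pm}$ evaluated along the unperturbed flow is $X_0^{\pm}$, at $\varepsilon=0$ and $t=t^{\pm}(x,y,0)$ this equals $(X_0^{\pm}h)\bigl(\varphi_0^{\pm}(t^{\pm}(x,y,0),x,y,g(x,y))\bigr)$, which is nonzero by the transversality built into hypothesis \textbf{(H1)} (parts (d)--(e)). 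This is exactly the denominator appearing in both formulas and is what makes the Implicit Function Theorem applicable; it also justifies differentiating $t^{\pm}$ with respect to the remaining variables. Then implicit differentiation gives
\[
\frac{\partial t^{\pm}}{\partial\varepsilon} = -\Bigl(\frac{\partial F^{\pm}}{\partial t}\Bigr)^{-1}\frac{\partial F^{\pm}}{\partial\varepsilon},
\qquad
\frac{\partial t^{\pm}}{\partial y} = -\Bigl(\frac{\partial F^{\pm}}{\partial t}\Bigr)^{-1}\frac{\partial F^{\pm}}{\partial y},
\]
evaluated at $(t^{\pm}(x,y,0),x,y,0)$.

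It remains to identify the two numerators. For the $\varepsilon$-derivative, the chain rule yields
\[
\frac{\partial F^{\pm}}{\partial\varepsilon}\Big|_{\varepsilon=0}
= \nabla h\bigl(\varphi_0^{\pm}(t^{\pm},x,y,g(x,y))\bigr)\cdot
\frac{\partial\varphi^{\pm}}{\partial\varepsilon}\bigl(t^{\pm},x,y,g(x,y),0\bigr),
\]
and by Proposition~\ref{derflux} the last factor is $\omega^{\pm}(t^{\pm},x,y,g(x,y))$, giving the first claimed formula. For the $y$-derivative, one differentiates $\varphi^{\pm}\bigl(t,x,y,g(x,y),0\bigr)$ with respect to $y$ \emph{holding $t$ fixed} (the dependence of $t^{\pm}$ on $y$ is already accounted for by the $\partial_t F^{\pm}$ term in the implicit-differentiation formula), which by the chain rule through both the explicit $y$ slot and the $z=g(x,y)$ slot produces precisely $Y^{\pm}(t,x,y,z)=\partial_y\varphi_0^{\pm}+\partial_z\varphi_0^{\pm}\cdot\partial_y g$; contracting with $\nabla h$ gives the second formula. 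The only mildly delicate point is bookkeeping the total-versus-partial derivative in $y$ correctly — making sure that the $y$-dependence entering through the crossing time $t^{\pm}(x,y,0)$ is not double-counted — but this is handled automatically once the defining identity $F^{\pm}\equiv 0$ is differentiated by the Implicit Function Theorem rather than by hand. Everything else is a routine application of the chain rule and Proposition~\ref{derflux}.
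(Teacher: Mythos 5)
Your proposal is correct and follows essentially the same route as the paper: implicit differentiation of the crossing identity $h(\varphi^{\pm}(t^{\pm}(x,y,\varepsilon),x,y,g(x,y),\varepsilon))=0$ in $\varepsilon$ and in $y$, identifying $\partial_t F^{\pm}=X_0^{\pm}h\neq 0$ as the denominator and using Proposition~\ref{derflux} (resp.\ the chain rule through the $y$ and $z=g(x,y)$ slots) to recognize $\omega^{\pm}$ (resp.\ $Y^{\pm}$) in the numerator. Your explicit appeal to the Implicit Function Theorem and to the transversality in \textbf{(H1)} merely makes precise what the paper leaves implicit.
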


\begin{proof}
Computing the derivative in the variable $\varepsilon$ at $\varepsilon=0$ of both sides of the equality $h( \varphi^{\pm}(t^{\pm}(x,y,\varepsilon),x,y,g(x,y),\varepsilon))=0$, we obtain
\begin{equation*}
\begin{aligned}
0 &= X^{\pm}_0h( \varphi^{\pm}(t^{\pm}(x,y,0),x,y,g(x,y),0)) \cdot \frac{\partial t^{\pm}}{\partial \varepsilon}(x,y,0) \\
&\quad +  \nabla h( \varphi^{\pm}(t^{\pm}(x,y,0),x,y,g(x,y),0)) \cdot \frac{\partial \varphi^{\pm}}{\partial \varepsilon}(t^{\pm}(x,y,0),x,y,g(x,y),0).
\end{aligned}
\end{equation*}
By Proposition \ref{derflux}, it follows that
\[
\frac{\partial t^{\pm}}{\partial \varepsilon}(x,y,0)=-\frac{\nabla h( \varphi^{\pm}_0(t^{\pm}(x,y,0),x,y,g(x,y))) \cdot \omega^{\pm}(t^{\pm}(x,y,0),x,y,g(x,y))}{X_0^{\pm}h( \varphi_0^{\pm}(t^{\pm}(x,y,0),x,y,g(x,y))) }
\]
with
\[
\omega^\pm(t,x,y,z)= D_{(x,y,z)} \varphi^{\pm}_0(t,x,y,z)\int_{0}^t (D_{(x,y,z)} \varphi^{\pm}_0(s,x,y,z))^{-1}\cdot X_1^\pm (\varphi^{\pm}_0(s,x,y,z))ds.
\]
In the same way we get that
\[
\frac{\partial t^{\pm}}{\partial y}(x,y,0)=-\frac{\nabla h( \varphi^{\pm}_0(t^{\pm}(x,y,0),x,y,g(x,y)))\cdot Y^{\pm}(t^{\pm}(x,y,0),x,y,g(x,y))}{X_0^{\pm}h( \varphi_0^{\pm}(t^{\pm}(x,y,0),x,y,g(x,y))) }
\]
where 
\[
Y^\pm(t,x,y,z) = \frac{\partial \varphi_0^\pm}{\partial y}(t,x,y,z) + \frac{\partial \varphi_0^\pm}{\partial z}(t,x,y,z) \cdot \frac{\partial g}{\partial y}(x,y).
\]
This concludes the proof of the proposition.
\end{proof}

\subsection{Proof of  Theorem \ref{thm}}
We consider the displacement function $\Delta: \mathcal{W} \times (-\varepsilon_0, \varepsilon_0) \rightarrow \mathbb{R}^n \times \mathbb{R}^{n-m} \times \mathbb{R}$ given by
\[
\Delta(x,y, \varepsilon) = \varphi^{+}(t^+(x,y,\varepsilon), x,y,g(x,y), \varepsilon) -  \varphi^{-}(t^-(x,y,\varepsilon), x,y,g(x,y), \varepsilon).
\]
Notice that $\pi_3(\Delta(x,y,\varepsilon)) = 0$, and $\Delta(x,y,0)$ vanishes on the submanifold $\mathcal{U}$, meaning $\Delta(u, v(u),0) = 0$ for $u \in \overline{\mathcal{V}}$. Utilizing Propositions \ref{derflux} and \ref{dertem}, we determine that the derivatives of $\Delta$ with respect to $\varepsilon$ and $y$, at $(u,v(u),0)$, are as follows:
\[
\dfrac{\partial \Delta }{\partial \varepsilon}(u,v(u),0) =\boldsymbol{\alpha}(u)\quad \text{and}\quad
\dfrac{\partial \Delta }{\partial y}(u,v(u),0) = \boldsymbol{\beta}(u),
\]
where $\boldsymbol{\alpha}(u) =\boldsymbol{\alpha}^+(u) - \boldsymbol{\alpha}^-(u)$ and $\boldsymbol{\beta}(u) =\boldsymbol{\beta}^+(u) - \boldsymbol{\beta}^-(u)$ with
\[
\begin{aligned}
\boldsymbol{\alpha}^\pm(u) &= \omega^\pm \circ P^\pm(u) - X_0^\pm(\varphi_0^\pm \circ P^\pm(u)) \cdot \frac{\nabla h(\varphi_0^\pm \circ P^\pm(u)) \cdot (\omega_0^\pm \circ P^\pm(u))}{X_0^\pm h(\varphi_0^\pm \circ P^\pm(u))} \\
&= \left( \text{Id}_{m+1} - \frac{X_0^\pm(\varphi_0^\pm \circ P^\pm(u)) \cdot \nabla h(\varphi_0^\pm \circ P^\pm(u))}{X_0^\pm h(\varphi_0^\pm \circ P^\pm(u))} \right) \cdot \omega^\pm \circ P^\pm(u),
\end{aligned}
\]
and
\[
\begin{aligned}
\boldsymbol{\beta}^\pm(u) &= Y^\pm \circ P^\pm(u) - \frac{X_0^\pm(\varphi_0^\pm \circ P^\pm(u)) \cdot \nabla h(\varphi_0^\pm \circ P^\pm(u)) \cdot (Y^\pm \circ P^\pm(u))}{X_0^\pm(\varphi_0^\pm \circ P^\pm(u))} \\
&= \left( \text{Id}_{m+1} - \frac{X_0^\pm(\varphi_0^\pm \circ P^\pm(u)) \cdot \nabla h(\varphi_0^\pm \circ P^\pm(u))}{X_0^\pm h(\varphi_0^\pm \circ P^\pm(u))} \right) \cdot Y^\pm \circ P^\pm(u),
\end{aligned}
\]
where $P^{\pm}(u) = (\tau^{\pm}(u),u,v(u),g(u,v(u)))$, $\text{Id}_{\ell}$ is the $\ell\times\ell$ identity matrix, and
\[
Y^\pm(t,x,y,z) = \frac{\partial \varphi_0^\pm}{\partial y}(t,x,y,z) + \frac{\partial \varphi_0^\pm}{\partial z}(t,x,y,z) \cdot \frac{\partial g}{\partial y}(x,y).
\]

Now, we define a proper function to apply Lemma 6. Let ${\bf g}: \mathcal{W} \times (-\varepsilon_0, \varepsilon_0) \rightarrow \mathbb{R}^n \times \mathbb{R}^{n-m}$ be given by
\begin{equation}
\begin{aligned}
{\bf g}(x,y, \varepsilon) =& \left(\pi_1 \circ \Delta(x,y,\varepsilon), \pi_2 \circ \Delta(x,y,\varepsilon) \right)\\
=&\left(\Pi_1 \cdot \Delta(x,y,\varepsilon), \Pi_2 \cdot \Delta(x,y,\varepsilon) \right)
\end{aligned}
\end{equation}
Expanding ${\bf g}(x,y, \varepsilon)$ around $\varepsilon=0$, and using Propositions \ref{derflux} and \ref{dertem}, we get
\[
{\bf g}(x, y, \varepsilon) = {\bf g}_0(x,y) + \varepsilon {\bf g}_1(x,y) + \mathcal{O}(\varepsilon^2),
\]
where ${\bf g}_0(x,y) = {\bf g}(x,y, 0)$ and 
\[
\begin{aligned}
{\bf g}_1(x,y) &= \left( \frac{\partial (\pi_1 \circ \Delta)}{\partial \varepsilon}(x,y,0), \frac{\partial (\pi_2 \circ \Delta)}{\partial \varepsilon}(x,y,0) \right) \\
&= \left( \Pi_1\cdot \frac{\partial \Delta }{\partial \varepsilon}(x,y,0), \Pi_2\cdot \frac{\partial \Delta }{\partial \varepsilon}(x,y,0) \right).
\end{aligned}
\]

Notice that ${\bf g}_0(x,y)$ vanishes on the submanifold $\mathcal{U}$, i.e., ${\bf g}_0(u, v(u))=0$ for $u \in \overline{\mathcal{V}}$. Additionally,
\[
\begin{aligned}
{\bf g}_1(u,v(u)) &= \left( \Pi_1\cdot \boldsymbol{\alpha}(u), \Pi_2\cdot  \boldsymbol{\alpha}(u)\right),
\end{aligned}
\]
and
\[
\begin{aligned}
D_{(x,y)}{\bf g}_0(u, v(u)) &= \left(
    \begin{array}{ll}
        \dfrac{\partial (\pi_1\circ \Delta)}{\partial x}(u,v(u),0) & \dfrac{\partial (\pi_1\circ \Delta)}{\partial y}(u,v(u),0) \\
        \dfrac{\partial (\pi_2\circ \Delta)}{\partial x}(u,v(u),0) & \dfrac{\partial (\pi_2\circ \Delta)}{\partial y}(u,v(u),0)
    \end{array}
\right) \\
&= \left(
    \begin{array}{ll}
        \Pi_1 \cdot \dfrac{\partial  \Delta}{\partial x}(u,v(u),0) & \Pi_1 \cdot \dfrac{\partial  \Delta}{\partial y}(u,v(u),0)\\
        \Pi_2 \cdot \dfrac{\partial  \Delta}{\partial x}(u,v(u),0) & \Pi_2 \cdot \dfrac{\partial  \Delta}{\partial y}(u,v(u),0)
    \end{array}
\right),
\end{aligned}
\]
with 
\[
\Pi_i \cdot \frac{\partial  \Delta}{\partial y}(u,v(u),0) = \Pi_i \cdot \boldsymbol{\beta}(u)
\]
for $i=1,2$. Following the notation of Lemma \ref{LS}, we get that
\[
\Gamma_u =  \Pi_1 \cdot \boldsymbol{\beta}(u)
\quad \text{and}\quad \Delta_u =  \Pi_2 \cdot \boldsymbol{\beta}(u).
\]
Hence, if $\text{det}(\Delta_u) = \text{det}\left(  \Pi_2 \cdot \boldsymbol{\beta}(u)\right) \neq 0$ for every $u \in \overline{\mathcal{V}}$, it follows that the bifurcation function \eqref{fLS} becomes
\[
\begin{aligned}
f(u)=\mathcal{M}(u) =& \pi_1(g_1(z_u)) - \Gamma_u (\Delta_u)^{-1} \pi_2(g_1(z_u)) \\
=&\Pi_1\cdot \boldsymbol{\alpha}(u)  - \Pi_1 \cdot \boldsymbol{\beta}(u) \cdot \left( \Pi_2 \cdot \boldsymbol{\beta}(u) \right)^{-1} \cdot \Pi_2\cdot \boldsymbol{\alpha}(u) \\
=& \Pi_1\cdot \left( \text{Id}_{m+1}  - \boldsymbol{\beta}(u) \cdot \left( \Pi_2 \cdot \boldsymbol{\beta}(u) \right)^{-1} \cdot \Pi_2\right)\cdot \boldsymbol{\alpha}(u) \\
\end{aligned}
\]
The proof of this Theorem follows by applying Lemma \ref{LS}.

\section{Proofs of Examples} \label{Ex}
In the sequence, we present some  concepts  and a classical result about Extended Complete Chebyshev system (ECT-system) need to proveTheorems \ref{p1}, \ref{quadratic} and \ref{para}.\\

Consider a closed interval $I$ in $\R$. We define an Extended Chebyshev system (ET-system), denoted $\F = [f_0, f_1,\ldots, f_n]$, on $I$ as an ordered set of real functions such that any nontrivial linear combination of its elements has at most $n$ zeros, counting multiplicities. If $[f_0, f_1,\ldots, f_k]$ forms an ET-system for all $k\in{0, \ldots,n}$, then $\F$ is termed an Extended Complete Chebyshev system (ECT-system) on $I$. The verification that $\F$ constitutes an ECT-system on $I$ requires demonstrating that $W_k(x)=W[f_0, \ldots, f_k](x) \neq 0$ on $I$ for all $k \in{0, \ldots, n}$. Here, $W[f_0, \ldots, f_k](x)$ represents the Wronskian of $\F$ with respect to $t$.
\[
W[f_0,\ldots f_n](t)(x)=\det \left( \begin{array}{ccc}
u_0(x)& \ldots& u_s(x)\\
u'_0(x)& \ldots & u'_s(x)\\
\vdots& & \vdots\\
u_0^{(s)}(x) &&  u_s^{(s)}(x) 
\end{array}\right).
\]
 Further details can be found in \cite{KASTU1966}. The next theorem, proved in \cite{KASTU1966}, is classical  result about  ECT-systems

\begin{theorem}\label{t1}
Let $\F=[u_0, u_1, ..., u_n]$ be an ECT-system on a closed interval  $[a, b]$. Then, the number of isolated zeros for every element of $\mbox{Span}(\F)$ does not exceed $n$. Moreover, for each configuration of $m \leq n$ zeros, taking into account their multiplicity, there exists $F\in \mbox{Span}(\F)$ with this configuration of zeros.
\end{theorem}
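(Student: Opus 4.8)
The plan is to separate the two assertions. The bound on the number of zeros is immediate from the definitions, while the realization of prescribed zero configurations is the substantive part; I would obtain it through a confluent ``generalized Vandermonde'' determinant built from an initial segment of $\F$, exploiting repeatedly that every prefix $[u_0,\dots,u_k]$ is itself an ET-system.

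For the upper bound: since $\F$ is an ECT-system, the full tuple $[u_0,\dots,u_n]$ is in particular an ET-system, so any nontrivial $F\in\Span(\F)$ has at most $n$ zeros counting multiplicity, hence at most $n$ isolated zeros (an element vanishing on an interval would have infinitely many zeros, which is already excluded). The key auxiliary fact I would isolate first is a nonsingularity statement: for any distinct nodes $t_1<\cdots<t_p$ in $[a,b]$ and multiplicities $\mu_1,\dots,\mu_p$ with $\mu_1+\cdots+\mu_p=k$, the $k\times k$ confluent collocation matrix whose rows are $\big(u_0^{(r)}(t_i),\dots,u_{k-1}^{(r)}(t_i)\big)$, for $1\le i\le p$ and $0\le r\le\mu_i-1$, is nonsingular. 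Indeed, a nontrivial kernel vector would yield a nontrivial combination of $u_0,\dots,u_{k-1}$ having zeros at the $t_i$ of orders $\mu_i$, i.e. $k$ zeros counting multiplicity, contradicting the ET-property of the prefix $[u_0,\dots,u_{k-1}]$, which allows at most $k-1$.

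For the realization, fix a configuration $a\le t_1\le\cdots\le t_m\le b$ with $m\le n$, grouped into distinct nodes with multiplicities $\mu_i$ summing to $m$. I would define $F(x)=\det M(x)$, where $M(x)$ is the $(m+1)\times(m+1)$ matrix whose first row is $\big(u_0(x),\dots,u_m(x)\big)$ and whose remaining $m$ rows are the collocation rows $\big(u_0^{(r)}(t_i),\dots,u_m^{(r)}(t_i)\big)$ for $0\le r\le\mu_i-1$. Expanding along the first row shows $F\in\Span[u_0,\dots,u_m]\subseteq\Span(\F)$, and the cofactor of $u_m(x)$ is exactly the confluent collocation determinant of $u_0,\dots,u_{m-1}$ at the nodes, which is nonzero by the nonsingularity statement above; hence $F\not\equiv0$. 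Differentiating $F$ up to order $\mu_i-1$ at $t_i$ makes the first row coincide with an already-present collocation row, so $F$ vanishes to order at least $\mu_i$ at each $t_i$, giving at least $m$ zeros counting multiplicity. Since $[u_0,\dots,u_m]$ is an ET-system, $F$ has at most $m$ zeros, so these are exactly the zeros of $F$, with the prescribed multiplicities and no spurious ones.

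The main obstacle is the confluent (higher-multiplicity) bookkeeping: verifying that the $r$-fold $x$-derivative of $\det M(x)$ at $x=t_i$ indeed reproduces a row already present in $M(t_i)$ for every $0\le r\le\mu_i-1$, and that the relevant minor is precisely the confluent collocation determinant controlled by the nonsingularity lemma. This requires care because differentiating a determinant produces a sum over its rows, and one must check that every term except the one replacing the first row vanishes by row repetition. Once this is settled, both the ``at most $n$ zeros'' bound and the full realization follow, reproducing the classical argument of Karlin and Studden \cite{KASTU1966}.
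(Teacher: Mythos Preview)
The paper does not supply its own proof of this theorem: it is quoted as a classical result and attributed to Karlin and Studden \cite{KASTU1966}, so there is nothing in the paper to compare your argument against. Your sketch is the standard argument and is correct; the one place where you overstate the difficulty is the derivative bookkeeping --- since only the first row of $M(x)$ depends on $x$, the $r$-th derivative $F^{(r)}(x)$ is simply $\det M(x)$ with its first row replaced by $(u_0^{(r)}(x),\dots,u_m^{(r)}(x))$, so there is no sum over rows to contend with and the row-repetition argument goes through immediately.
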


\subsection{Proof of Theorem \ref{p1}}\label{sec:proofp1}
 Let $f(x,y,z)=z$. Considering the change of coordinates 
\begin{equation*}\label{coor}
(x,y,z)\rightarrow(y,x,z)
\end{equation*}
the differential system \eqref{example1} for $i\in\{a,b\}$
becomes
\begin{equation*}\label{example11}
\left(\dot{x}, \dot{y}, \dot{z}\right)^T=\left\{
\begin{array}{ll}
\tilde{X}_{i,0}^+(x,y,z)+\varepsilon\tilde{ X}_1^-(x,y,z)& z>0,\\
\tilde{X}_{i,0}^-(x,y,z)+\varepsilon \tilde{X}_1^-(x,y,z)& z<0,
\end{array}\right.
\end{equation*}
where
\begin{equation*}
\tilde{X}_{a,0}^{\pm}(x,y,z)=
\left(
\begin{array}{c}
-z\mp 1\\
y\\
x+y
\end{array}
\right), \quad
\tilde{X}_{0,b}^{\pm}(x,y,z)= \left(
\begin{array}{c}
z\mp 1\\
y\\
x+y
\end{array}\right), 
\end{equation*}
and
\begin{equation*}
\tilde{X}_1^{\pm}(x,y,z)=
\left(
\begin{array}{l}
  \beta_0^++\beta_2^{\pm} x+\beta_1^{\pm} y+\beta_3^{\pm} z,\\
\alpha_0^++\alpha_2^{\pm} x+\alpha_1^{\pm} y+\alpha_3^{\pm} z,\\
\kappa_0^++\kappa_1^{\pm} y+\kappa_2^{\pm} x+\kappa_3^{\pm} z
\end{array}\right).
\end{equation*}\
Let $i=a$. Computing the solution of system  $\left(\dot{x}, \dot{y}, \dot{z}\right)^T=\tilde{X}_{a,0}(x,y,z)$ with initial condition $(x,y,z)$, we obtain
\begin{equation*}\varphi^{\pm}_0(t,x,y,z)=
\left(
\begin{array}{c}
\frac{1}{2} \left(\cos (t) (2 x+y)+\sin (t) (y-2 z\mp2)-e^t y\right)\\
e^t y\\
\frac{1}{2} \left(\sin (t) (2 x+y)+\cos (t) (2 z-y\pm2)+e^t y\mp2\right)
\end{array}\right).
\end{equation*}

Then, the submanifold of initial conditions $\Sigma$ whose orbits are periodic and each of them reaches transversally of $\Sigma$ is $\mathcal{C}= \{(x,0,0)\in \R^3: x\neq0\}$. In this case, the return-times are $\tau^\pm(x)=\pm2\arctan(x)$. In order to apply Theorem \ref{thm} we need to write some submanifold of $\mathcal{C}$ as 
$$\mathcal{Z}=\text{Graph}(g|_{\mathcal{U}}),$$ 
with $\mathcal{U}=\{(u, v(u)): u\in\overline{\mathcal{V}}\}$
where $\mathcal{V}$ an open bounded subset of $\mathcal{\R}$ and $v:\overline{\mathcal{V}} \rightarrow \mathcal{\R}$ a $\mathcal{C}^{2}$ function. Let $\mathcal{V}=(a_0,b_0)$, with $0<a_0<b_0$, with $a_0$ sufficiently engough small, and $v(u)=0$. Therefore, using the formulae \eqref{Mel} and \eqref{beta} we get that $
\boldsymbol{\beta}(u)=2 \sinh \left(2 \tan ^{-1}(u)\right)\neq 0$
and
\begin{equation}\label{Mel2}
2u\mathcal{M}(u)=2 L_{0} f_0(u)-L_{1} f_1(u)+L_{2} f_2(u),
\end{equation}
where $f_0(u)=u$, $f_1(u)=\left(u^2+1\right) \tan ^{-1}(u)$, $f_2(u)= \tanh \left(\tan ^{-1}(u)\right)$ and
\begin{equation*}
\begin{array}{ll}
L_{0}=&-\alpha_0^-+\alpha_0^+-\alpha_3^--\alpha_3^++\beta_2^-+\beta_2^++2 \kappa_0^--2 \kappa_0^++\kappa_3^-+\kappa_3^+\\
L_{1}=&\alpha_2^-+\alpha_2^+-\alpha_3^--\alpha_3^++2 (\beta_2^-+\beta_2^+)+2 \kappa_3^-+2 \kappa_3^+,
\\L_{2}=&2 \alpha_0^--2 \alpha_0^++\alpha_2^-+\alpha_2^++\alpha_3^-+\alpha_3^+.
\end{array}
\end{equation*}
 Straightforward computations give us the the parameter vector $(L_0, L_1, L_2) \in \R^3$ depends on $\alpha_i^+, \beta_i^{\pm}$ and  $\kappa_i^\pm$ in a surjective way.
A lower bound of the number of simple zeros that \eqref{Mel2} can have on $(a_0, b_0)$ follows by studying the Wronskians
of the ordered set $\F_a=[f_0, f_1, f_2]$ at $u=0$. For that, we compute the Taylor series of functions $f_1$ an $f_2$ around $u = 0$,
$$f_1(u)=u+\dfrac{2 u^3}{3}-\dfrac{2 u^5}{15}+\mathcal{O}(u^6)\quad \text{ and } \quad f_2(u)=u-\frac{2 u^3}{3}+\frac{2 u^5}{3}+\mathcal{O}(u^6).$$
Then 
\begin{equation*}
\begin{array}{ll}
W_0(u)=u,\\
W_1(u)= \dfrac{4}{3} u^5 +O\left(u^6\right),\\
W_2(u)= \dfrac{256 u^6}{45}+O\left(u^7\right).\\
\end{array}
\end{equation*}
This implies that there exists $r_0>0$ sufficiently small, such that the Wronskian $W_1, W_2$ and $W_3$ do not vanishes on $(0, r_0)$ and, consequently, $\F_a$ is an ECT-system on $(0, r_0)$. The proof of  proposition for $i=a$,
follows by applying  Theorems \ref{t1} and \ref{thm}.

Now, consider $i=b$. Then,  the solution of the system  $\left(\dot{x}, \dot{y}, \dot{z}\right)^T=\tilde{X}_{b,0}(x,y,z)$ with initial condition $(x,y,z)$ is
\begin{equation*}\varphi^{\pm}_0(t,x,y,z)=
\left(
\begin{array}{c}
\dfrac{1}{2} (\cosh (t) (t y+2 x)+\sinh (t) ((t-1) y+2 (z\mp1)))\\
e^t y\\
\dfrac{1}{2} (\sinh (t) (t y+2 x+y)+\cosh (t) (t y+2 z\mp2)\pm2).
\end{array}\right)
\end{equation*}
In this case, $\mathcal{C}= \{(x,0,0)\in \R^3: 0<|x|<1\}$ and the return times are 
 $$\tau^\pm(x)=\log\left(\dfrac{1\pm x}{1\mp x}\right).$$
Let $\mathcal{Z}=\text{Graph}(g|_{\mathcal{U}})$ 
with $\mathcal{U}=\{(u, v(u)): u\in\mathcal{V}\}$ where $\mathcal{V}=(a_1, b_1)$,  with $a_1$ sufficiently enough small, and $v(u)=0$. By the formulae \eqref{Mel} and \eqref{beta} we get that 
$\boldsymbol{\beta}(u)=-\dfrac{4 u}{-1 + u^2}$ and
\begin{equation*}
8u \mathcal{M}(u)=2 L_{0} f_0(u)-2 L_{1}f_1(u)+4 L_{2} f_2(u),
\end{equation*}
where $f_0(u)=u$, $f_1(u)=\left(u^2-1\right) \tanh ^{-1}(u)$, $f_2(u)=u \left(u^2-1\right) \tanh ^{-1}(u)^2$ and
\begin{equation*}
\begin{array}{ll}
L_{0}=&-4 \alpha_0^-+4 \alpha_0^+-\alpha_2^--\alpha_2^++3 \alpha_3^-+3 \alpha_3^+-4 (\beta_2^-+\beta_2^+)+8 \kappa_0^--8 \kappa_0^+-4 \kappa_3^--4 \kappa_3^+,\\
L_{1}=&4 \alpha_0^--4 \alpha_0^++\alpha_2^-+\alpha_2^+-3 \alpha_3^--3 \alpha_3^++4 (\beta_2^-+\beta_2^+)+4 \kappa_3^-+4 \kappa_3^+,
\\L_{2}=&\alpha_2^-+\alpha_2^++\alpha_3^-+\alpha_3^+.
\end{array}
\end{equation*}
It is easy to seen that the parameter vector $(L_0, L_1, L_2) \in \R^3$ depends on $\alpha_i^+, \beta_i^{\pm} $ and $k_i^\pm$ in a surjective way. Moreover, in a neighborhood of the origin
$$f_1(u)=- u+\frac{2 u^3}{3}+\frac{2 u^5}{15}+O\left(u^6\right) \quad \text{and} \quad f_2(u)=- u^3+\frac{u^5}{3}+O\left(u^6\right).$$
Hence, the wronskian of the ordered set of functions $\F_2=[f_0, f_1, f_2]$ around at $u=0$ are
\begin{equation*}
\begin{array}{ll}
W_0(u)=u,\vspace{0.1cm}\\
W_1(u)= \dfrac{4}{15} u^3 +\mathcal{O}(u^6),\vspace{0.1cm}\\
W_2(u)= \dfrac{256}{45} u^6+\mathcal{O}(u^7).\\
\end{array}
\end{equation*}
 It concludes the proof of proposition for $i=b.$

\subsection{Proof of Theorem \ref{quadratic}}\label{sec:proofquadratic}
The proof that the unperturbed differential system \eqref{example1} for $i=a$ and $f(x,y,z)$ has an invariant plane at $\mathcal{S}={(x,y,z)\in \R^3: x=0}$ containing a period annulus foliated by crossing periodic orbits is straightforward and will be omitted. 
 Considering the change of coordinates \eqref{coor}, $\Sigma=\{(x,y,z): f(x,y,z)=-d x^2 - c x y - y^2 + z\}$ becomes
$$\Sigma=\{(x,y,z): z=g(x,y)\},$$
 with  $g(x,y)=x^2 + c x y + d y^2$. Thus, the submanifold of initial conditions of $\Sigma$ whose orbits are periodic and each of them reaches transversally of $\Sigma$ is $\mathcal{C}=
\{(x,0,x^2)\in \R^3: x>1\}$. Let $\mathcal{Z}=\text{Graph}(g|_{\mathcal{U}})$ 
with $$\mathcal{U}=\{(u, v(u)): u\in\overline{\mathcal{V}}\},$$
where  $\mathcal{V}=(a_2,b_2)$, with $2<a_2<b_2$, and $v(u)=0$. Then
$\tau^+(u)=\tan ^{-1}\left(\frac{2 \left(u^3+u\right)}{u^4+u^2+1}\right)>0$ and $$\tau^-(u)=\tan ^{-1}\left(\dfrac{2 u \left(u^2-1\right)}{u^4-3 u^2+1}\right)-2 \pi<0.$$
By the formulae \eqref{Mel} and \eqref{beta}, it follows that
$\boldsymbol{\beta}(u)=e^{\text{$\tau^+(u) $}}-e^{\tau^-(u)}\neq 0$
\begin{equation*}
4 \pi  u \left(4 u^4+4 u^2-3\right) \left(e^{\tau^+(u)}-e^{\tau^-(u)}\right) \mathcal{M}(u)= \sum_{i=0}^7 L_i f_i(u).
\end{equation*}
where the parameter vector $(L_0, L_1, L_2, L_3, L_4, L_5, L_6, L_7) \in \R^8$ depends on $\alpha_i^+, \beta_i^{\pm}$ and $k_i^\pm$ in a surjective way. And
\begin{equation*}
\begin{array}{ll}
f_0(u)=&e^{\tau^-(u)} \left(e^{\tau^+(u)} (2 (c-1) u+1)-1\right)-2 (c-1) u-e^{\tau^+(u)}+1,\\
f_1(u)=&u \left(e^{\tau^+(u)}-e^{\tau^-(u)}\right),\\
f_2(u)=&  u^2 \left(e^{\tau^-(u)} \left(e^{\tau^+(u)} (2 (c-1) u+1)-1\right)-2 (c-1) u-e^{\tau^+(u)}+1\right),\\
f_3(u)=&  u^3 \left(e^{\tau^+(u)}-e^{\tau^-(u)}\right),\\
f_4(u)=&\tau^-(u) \left(2 u^6+u^4-u^2+3\right) \left(e^{\tau^+(u)}-e^{\tau^-(u)}\right)\\
&-2 \pi  \left(u^2-3\right) \left(e^{\tau^-(u)} \left(e^{\tau^+(u)} (2 (c-1) u+1)-1\right)-2 c u-e^{\tau^+(u)}+2 u+1\right),\\
f_5(u)=&  u^5 \left(e^{\tau^+(u)}-e^{\tau^-(u)}\right),\\
f_6(u)=& \tau^+(u) \left(2 u^2-1\right) \left(u^4+3 u^2+1\right) \left(e^{\tau^+(u)}-e^{\tau^-(u)}\right),\\
f_7(u)=&\pi  \left(2 u (2 (c-1) u+1)^2 e^{\tau^+(u)+\tau^-(u)}-2 u (1-2 (c-1) u)^2\right).
\end{array}
\end{equation*}
Computing the wronskians of $\F_3=[f_0,f_1,f_2,f_3, f_4, f_5, f_6, f_7]$ around at $u=2$, we get that $$
W_i(u)=k_i+\mathcal{O}(u-2),$$ with $k_i\neq0$, for $i=0, \ldots,7.$ Then,  the proof of  proposition
follows by applying Theorems \eqref{thm} and \eqref{t1}.

\subsection{Proof of Theorem \ref{para}}\label{sec:proofpara}
Performing the change of coordinates $(x,y,z)\rightarrow(x,z,y)$, the differential system \eqref{example2}
becomes
\begin{equation*}\label{example22}
\left(\dot{x}, \dot{y}, \dot{z}\right)^T=X_0(x,y,z)+\epsilon X_1(x,y,z),
\end{equation*}
where
\begin{equation*}
X_{0}(x,y,z)=
\left(
\begin{array}{c}
-z\\
\lambda(x^2+z^2-y)\\
x
\end{array}
\right), \quad
X_{1}^{\pm}(x,y,z)= \left(
\begin{array}{c}
\tilde{P}_1^{\pm}(x,z,y)\\
\tilde{P}_3^{\pm}(x,z,y)\\
\tilde{P}_2^{\pm}(x,z,y)
\end{array}\right), 
\end{equation*}

\begin{equation*}
P_{\ell}(x,y,z)=\left\{
\begin{array}{ll}
P_\ell^+(x,y,z)=\displaystyle \sum_{0\leq i+j+k\leq 2} p^{\ell,+}_{ijk} x^i y^jz^k, & z>g(x,y),\\
\\
P_\ell^-(x,y,z)=\displaystyle \sum_{0\leq i+j+k\leq 2} p^{\ell,-}_{ijk} x^i y^jz^k& z<g(x,y),\\
\end{array}\right.
\end{equation*}
with $g(x,y)\in\{0, y, y^2\}$.
Iin this coordinates, the submanifold of initial conditions $\mathcal{C}\subset\Sigma$ whose orbits are periodic and each of them reaches transversally of $\Sigma$ is
$$\mathcal{C}=
\left\{
\begin{array}{ll}
\{(x,x^2,g(x,x^2))\in \R^3: x>0\}, & g(x,y)=0,\\
\{(x,y,g(x,y))\in \R^3: (2x)^2+(2y-1)^2=1 \},& g(x,y)=y,\\
\{(x,y^2,g(x,y^2))\in: x>0 \},& g(x,y)=y^2.
\end{array}
\right.$$
For each  $g\in\{0,-y, y^2\}$ we consider $\mathcal{Z}$ submanifold of $\mathcal{C}$, that can be written as
$\mathcal{Z}=\text{Graph}(g|_{\mathcal{U}})$
 with $\mathcal{U}=\{(u, v(u)): u\in\overline{\mathcal{V}}\}$
where $\mathcal{V}$ an open bounded subset of $\mathcal{\R}$ and $v:\overline{\mathcal{V}} \rightarrow \mathcal{\R}$ a $\mathcal{C}^{2}$ function.\\

For $g(x,y)=0$, $\mathcal{V}=(a_3,b_3)$ with $0<a_3<b_3$ and $v(u)=u^2$. Therefore $\tau^{\pm}(u)=\pm \pi$. Using the formulae \eqref{Mel} and \eqref{beta}, we get 
$\boldsymbol{\beta}(u)=e^{-\pi  \lambda }-e^{\pi  \lambda }\neq 0$
and
\begin{equation}\label{Mel3}
(e^{-\pi  \lambda }-e^{\pi  \lambda })\mathcal{M}(u)=L_0+L_1u+L_2u^2+L_3 u^3+L_4 u^4.
\end{equation}
where the parameter vector $ (L_0, L_1, L_2, L_3, L_4) \in \R^5$  depends on the original parameters in
a surjective way. By Descartes Theorem, we conclude that the maximum number of simple zeros that \eqref{Mel3} can have is 4. The proof of statement (a) follows by applying Theorems \ref{thm} and \ref{t1}.

For $g(x,y)=y$,  $\mathcal{V}=(a_4, b_4)$, with $0<a_4<b_4<1/4$ and $v(u)=\frac{1}{2} \left(1-\sqrt{1-4 u^2}\right)$. Hence, 
$$\tau_1(u)=\cos ^{-1}\left(-\sqrt{1-4 u^2}\right)$$ and $\tau_{2}(u)=\tau_{1}(u)-2 \pi$.  In this case, $\boldsymbol{\beta}(u)=-\left(e^{2 \pi  \lambda }-1\right) \sqrt{1-4 u^2} e^{-\lambda  \tau_1(u)}\neq 0$
and
$$-\frac{\boldsymbol{\beta}(u)}{12 u} \mathcal{M}(u)=\sum_{i=0}^8L_i f_i(u),$$
where the parameter vector $ (L_0, L_1, L_2, L_3, L_4, L_5, L_6) \in \R^7$  depends on the original parameters in
a surjective way. And $f_0(u)=u$, $f_1(u)=u^2$, $f_2(u)=u^3$,
\begin{equation*}
\begin{array}{lll}
f_3(u)=  u\sqrt{1-4 u^2},& f_5(u)=\sqrt{1-4 u^2}-1,\\
f_4(u) =u^2 \cos ^{-1}\left(-\sqrt{1-4 u^2}\right), & f_6(u)=\left(\sqrt{1-4 u^2}-1\right) \cos ^{-1}\left(-\sqrt{1-4 u^2}\right)\\
\end{array}
\end{equation*}
Computing the wronskians of Wronskians of the ordered set $\F_4=[f_0, f_1, f_2, f_3, f_4, f_5, f_6]$ on $\mathcal{V}$, we have \begin{equation*}
\begin{array}{ll}
W_0(u)=1,\\
W_1(u)= 2 u,\\
W_2(u)= 6 u^2,\\
W_3(u)= -\dfrac{96 u^5}{\left(1-4 u^2\right)^{5/2}},\\
W_4(u)=\dfrac{6144 u^8}{\left(4 u^2-1\right)^5},\\
W_5(u)=\dfrac{36864 u^3 \left(96 u^4+10 \left(8 \sqrt{1-4 u^2}-15\right) u^2-35 \left(\sqrt{1-4 u^2}-1\right)\right)}{\left(1-4 u^2\right)^{15/2}},\\
W_6(u)=-\dfrac{905969664 u^6 \left(2 u^2+\sqrt{1-4 u^2}-1\right)}{\left(1-4 u^2\right)^{21/2}}.\\
\end{array}
\end{equation*}
By straightforward computations, we get that $W_i(u)$, for $i=0,1,2,3,4,5,6$, does not vanish in $\mathcal{V}$. So, from Theorems \ref{thm} and \ref{t1} the proof of statement (b) follows.\\

For $g(x,y)=x^2$, $\mathcal{V}=(a_5, b_5)$, with $0<a_5<b_5<1$, and $v(u)=u^2+u^4$. Hence, 
$$\tau_1(u)=\tan ^{-1}\left(\dfrac{2 u}{u^2-1}\right)$$ and $\tau_{2}(u)=\tau_{1}(u)-2 \pi.$ Thus, $\boldsymbol{\beta}(u)=-\left(e^{2 \pi  \lambda }-1\right) e^{-\lambda  \tau_1(u)}$ and
$$\left(\frac{\left(e^{2 \pi  \lambda }-1\right) e^{-\lambda  \tau_1(u)}}{6 \left(2 u^2+1\right)}\right)\mathcal{M}(u)=\sum_{i=0}^8L_i f_i(u),$$
where where the parameter vector $ (L_0, L_1, L_2, L_3, L_5, L_6, L_7, L_8) \in \R^9$  depends on the original parameters in
a surjective way. And $f_0(u)=1$, $f_1(u)=u^2$, $f_2(u)=u^4$, $f_3(u)=u^6$, $f_4(u)=u^8,$
\begin{equation*}
\begin{array}{lll}
f_5(u)=  u^3 (1 + u^2)^2,& f_7(u)=u (1 - 2 u^4 - u^6),\\
f_6(u)=u^3 (1 + u^2)^2 \tan ^{-1}\left(\dfrac{2 u}{u^2-1}\right), & 
 f_8(u)=(u - 2 u^5 - u^7) \tan ^{-1}\left(\dfrac{2 u}{u^2-1}\right).
\\
\end{array}
\end{equation*} The Wronskians
of the ordered set $\F_5=[f_0, \ldots, f_8]$ on $\mathcal{V}$ are given by 
\begin{equation*}
\begin{array}{ll}
W_0(u)=&1,\\
W_1(u)=& 2 u,\\
W_2(u)=& 16 u^3,\\
W_3(u)=& 786 u^6,\\
W_4(u)=&294912 u^{10},\\
W_5(u)=&-4423680 u^8 (3 - 6 u^2 + 7 u^4),\\
W_6(u)=&-\dfrac{54358179840 u^{12}}{\left(u^2+1\right)^4},\\
W_7(u)=&-\dfrac{4892236185600 u^6 \left(10 u^4-21 u^2+21\right)}{\left(u^2+1\right)^5}\\
W_8(u)=&-\dfrac{601157982486528000 u^{10} \left(2 u^2+15\right)}{\left(u^2+1\right)^{12}}.
\end{array}
\end{equation*}
It can easily be seen that the Wronskians do not vanish on $\mathcal{V}$. Hence, from Theorem \ref{thm} and \ref{t1} the proof  the statement (c) follows.

\section*{Acknowledgments}

DDN is partially supported by S\~{a}o Paulo Research Foundation (FAPESP) grants 2022/09633-5,  2019/10269-3, and 2018/13481-0, and by Conselho Nacional de Desenvolvimento Cient\'{i}fico e Tecnol\'{o}gico (CNPq) grant 309110/2021-1.

\bibliographystyle{abbrv}
\bibliography{references.bib}

\end{document}